\documentclass[11pt,a4paper]{article}
\usepackage{xspace}
\usepackage{mathrsfs}
\usepackage{amsmath,amssymb,amsthm} 
\usepackage{pstricks,pst-node,pst-tree}

\newcommand{\resp}{\mbox{resp.}\xspace}
\newcommand{\ie}{\textit{i.e.}\xspace}

\newcommand{\Z}{\ensuremath{\mathbb{Z}}\xspace}

\newcommand{\Sig}{\mathfrak{S}}
\newcommand{\im}{\operatorname{\mathrm{Im}}}

\newcommand{\B}{\ensuremath{\mathbb{F}_2}\xspace}
\newcommand{\J}{\operatorname{\mathscr{J}\!}}
\newcommand{\G}{\mathscr{G}}
\newcommand{\intervalle}[1]{{[#1[}}
\newcommand{\inter}{\intervalle{i,j}}
\newcommand{\zinter}{\intervalle{0,i}}
\newcommand{\interz}{\intervalle{i,0}}

\newtheorem{lemma}{Lemma}
\newtheorem{theorem}{Theorem}

\newtheorem{proposition}{Proposition}
\newtheorem{questions}{Questions}

\newtheorem{remark}{Remark}
\newtheorem{definition}{Definition}
\newtheorem{thma}{Theorem A}
\newtheorem{thmaa}{Theorem A'}
\newtheorem{thmb}{Theorem B}

\begin{document}
\title{Negative local feedbacks in Boolean networks}
\author{Paul Ruet\\[2mm]
CNRS \\
IRIF, Universit\'{e} Paris Diderot - Paris 7 \\
Case 7014, 75205 Paris Cedex 13, France \\[2mm]
{\tt ruet@irif.fr}}
\maketitle
\begin{abstract}
We study the asymptotic dynamical properties of Boolean networks without local negative cycle. While the properties of Boolean networks without local cycle or without local positive cycle are rather well understood, recent literature raises the following two questions about networks without local negative cycle. Do they have at least one fixed point? Should all their attractors be fixed points? The two main results of this paper are negative answers to both questions: we show that and-nets without local negative cycle may have no fixed point, and that Boolean networks without local negative cycle may have antipodal attractive cycles.
\end{abstract}

\section{Introduction}
A Boolean network is a map $f$ from $\B^n$ to itself, where $n$ is a positive integer and $\B$ is the two-element field. We view $f$ as representing the dynamics of $n$ interacting components which can take two values, $0$ and $1$: at a state $x\in\B^n$, the coordinates which can be updated are the integers $i\in\{1,\ldots,n\}$ such that $f_i(x)\neq x_i$. Several dynamical systems can therefore be associated to $f$, depending on the choice of update scheme. In the synchronous dynamics \cite{Kau93,DES08}, all coordinates are updated simultaneously (it is simply the iteration of $f$), while in the (nondeterministic) asynchronous dynamics \cite{Tho91}, one coordinate is updated at a time, if any. Other dynamics are considered in the literature (e.g. random \cite{Kau69}), as well as comparisons between update schemes \cite{GS08}.

Boolean networks have plenty of applications. In particular, they have been extensively used as discrete models of various biological networks, since the early works of McCulloch and Pitts \cite{MCP43}, S. Kauffman \cite{Kau69} and R. Thomas \cite{Tho73}.

To a Boolean network $f$, it is possible to associate, for each state $x$, a directed graph $\G(f)(x)$ representing local influences between components $1,\ldots,n$ and defined in a way similar to Jacobian matrices for differentiable maps. Local feedbacks, \ie cycles in these local interaction graphs $\G(f)(x)$, have an impact on fixed points of $f$: \cite{SD05} proves that Boolean networks without local cycle have a unique fixed point.

On the other hand, the edges of $\G(f)(x)$ naturally come up with a sign, which is positive in case of a covariant influence and negative otherwise. Intuitively, when applied to the modeling of, e.g., gene regulatory networks, positive and negative signs correspond respectively to activatory and inhibitory effects. It is therefore expected that the dynamics associated with positive and negative cycles (the sign being the product of the signs of the edges) will in general be very different, and the biologist R. Thomas \cite{Tho81,TK01} proposed rules relating positive cycles to multistationarity (which corresponds to cellular differentiation in the field of gene networks) and negative cycles to sustained oscillations (a form of homeostasis).

In terms of Boolean networks, sustained oscillations can be interpreted either by an attractive cycle (a cycle in the asynchronous dynamics which cannot be escaped), or more generaly by a cyclic attractor (a strongly connected component of the asynchronous dynamics which does not consist in a fixed point). Also notice that the absence of fixed point entails a cyclic attractor. Therefore, these rules give rise to the following series of questions:
\begin{questions}
\begin{enumerate}
\item Does a network without local positive cycle have at most one fixed point?
\item Does a network without local negative cycle have at least one fixed point?
\item Does a network with a cyclic attractor have a local negative cycle?
\item Does a network with an attractive cycle have a local negative cycle?
\end{enumerate}
\end{questions}
Question 1 is given a positive answer in \cite{RRT08}.

Question 2, which is mentioned for instance in \cite{Ric10}, is also a negative counterpart of Question 1, and thus motivated by the above result of \cite{SD05}. Question 3 is related to the following result: if $f$ has a cyclic attractor, then the global interaction graph $\G(f)$ obtained by taking the union of the local graphs $\G(f)(x)$ has a negative cycle \cite{Ric10}. Several partial results are also known for local negative cycles \cite{Ric11,RR12}, and are recalled in Section \ref{sec:known}. In the more general discrete case (with more than two values), \cite{Ric10} shows that a network without local negative cycle may have an attractive cycle and no fixed point. A partial positive answer to Question 4 is known for Boolean networks of a special class called and-nets (in which all dependencies are conjunctions): and-nets with a special type of attractive cycle, called antipodal, do have a local negative cycle \cite{Rue14}.

Theorem A gives a negative answer to Question 2, and hence to Question 3, even for and-nets. In Section \ref{sec:andfp}, we construct a $12$-dimensional and-net with no local negative cycle and no fixed point. The proof relies essentially on a trick for delocalizing cycles by expanding and-nets (Section \ref{sec:qdf}). Section \ref{sec:ker} also mentions a consequence for kernels in graph theory (Theorem A').

Then Theorem B gives a negative answer to Question 4: in Section \ref{sec:attcyc}, we prove that arbitrary Boolean networks without local negative cycle may have (antipodal) attractive cycles. For this construction, we start with a Boolean network with an antipodal attractive cycle, and then modify the neighborhood of this attractive cycle so as to delocalize all negative cycles. The proof that the resulting network has indeed no local negative cycle is simplified by using some isometries of $\B^n$ (Sections \ref{sec:def} and \ref{sec:equivariance}). We may remark that the metric structure of $\B^n$ was the main ingredient for unsigned cycles and positive cycles too (see \cite{Rue14}), though the proofs were apparently very different.

Section \ref{sec:addition} includes remarks on non-expansive Boolean networks, hoopings, invertible Jacobian matrices, and reduction of networks.

\section{Definitions and statement of results}
\label{sec:bool}
Let $\{e^1,\ldots,e^n\}$ denote the canonical basis of the vector space $\B^n$, and for each subset $I$ of $\{1,\ldots,n\}$, let $e^I=\sum_{i\in I} e^i$, where the sum is the sum of the field \B. We may remove some brackets and write $e^{1,2}$ for $e^{\{1,2\}}$ for instance. For $x,y\in\B^n$, $d(x,y)$ denotes the Hamming distance, \ie the cardinality of the unique subset $I\subseteq\{1,\ldots,n\}$ such that $x+y=e^I$.

\subsection{Boolean networks}
A Boolean network is a map $f:\B^n\rightarrow\B^n$. To such a map, it is possible to associate several dynamics with points of $\B^n$ as the states.

The \emph{synchronous dynamics} is simply the iteration of $f$. The \emph{asynchronous dynamics} is the directed graph $\Gamma(f)$ with vertex set $\B^n$ and an edge from $x$ to $y$ when for some $i$, $y=x+e^i$ and $f_i(x)\neq x_i$. It is a nondeterministic dynamics (a state $x\in\B^n$ can have $0$ or several successors) in which at most one coordinate is updated at a time. The coordinates $i$ such that $f_i(x)\neq x_i$ are those which can be updated in state $x$, and may therefore naturally be viewed as the \emph{degrees of freedom} of $x$.

The asynchronous dynamics, illustrated in Figure \ref{fig:bn}, can be viewed as a weak form of orientation of the Boolean hypercube $\B^n$, in which each undirected edge is replaced by $0,1$ or $2$ of the possible choices of orientation.

It is easily seen that $f$ can be recovered from $\Gamma(f)$: $f(x)=x+e^I$, where $\{(x,x+e^i), i\in I\}$ is the set of edges leaving $x$ in $\Gamma(f)$.
\begin{figure}[!t]
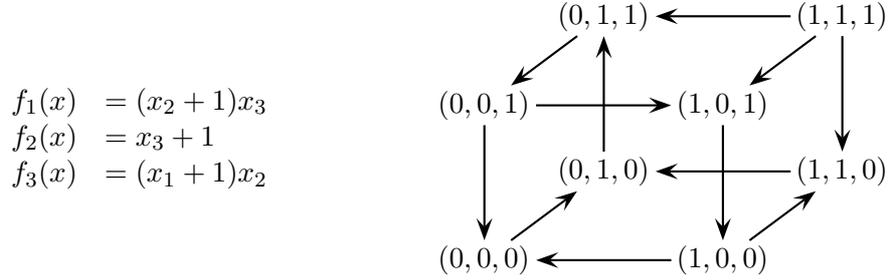

$$
\begin{array}{cc}
\begin{array}{rl}
f_1(x) &= (x_2+1)x_3 \\
f_2(x) &= x_3+1 \\
f_3(x) &= (x_1+1)x_2
\end{array}
&
\qquad\qquad
\begin{array}{cccc}
& \rnode{011}{(0,1,1)} && \rnode{111}{(1,1,1)} \\[7mm]
\rnode{001}{(0,0,1)} && \rnode{101}{(1,0,1)} & \\[4mm]
& \rnode{010}{(0,1,0)} && \rnode{110}{(1,1,0)} \\[7mm]
\rnode{000}{(0,0,0)} && \rnode{100}{(1,0,0)} &
\psset{nodesep=2pt,arrowsize=6pt}
\ncline[arrows=<-]{000}{100}
\ncline[arrows=->]{000}{010}
\ncline[arrows=<-]{000}{001}
\ncline[arrows=->]{111}{011}
\ncline[arrows=->]{111}{101}
\ncline[arrows=->]{111}{110}
\ncline[arrows=->]{100}{110}
\ncline[arrows=->]{110}{010}
\ncline[arrows=->]{010}{011}
\ncline[arrows=->]{011}{001}
\ncline[arrows=->]{001}{101}
\ncline[arrows=->]{101}{100}
\end{array}
\end{array}
$$
\caption{A map $f:\B^3\rightarrow\B^3$ and the asynchronous dynamics $\Gamma(f)$ associated to it.}
\label{fig:bn}
\end{figure}

We shall be essentially interested in asymptotic dynamical properties. Both dynamics agree on fixed points. On the other hand, a \emph{trajectory} will be a path in the asynchronous dynamics $\Gamma(f)$, and an \emph{attractor} a terminal strongly connected component of $\Gamma(f)$. An attractor which is not a singleton (\ie which does not consist in a fixed point) is called a \emph{cyclic attractor}. In particular, a network with no fixed point must have at least one cyclic attractor. In the case of the fixed-point-free network $f$ of Figure \ref{fig:bn}, the unique cyclic attractor consists in the subgraph of $\Gamma(f)$ induced by $\B^3\setminus\{(1,1,1)\}$.

\emph{Attractive cycles}, \ie, cyclic trajectories $\theta$ such that for each point $x\in\theta$, $d(x,f(x))=1$, are examples of cyclic attractors. Observe that attractive cycles are deterministic, since any point in $\theta$ has a unique degree of freedom, hence they can also be defined as cycles of the synchronous dynamics in which exactly one coordinate is updated at a time.

The \emph{antipode} of $x\in\B^n$ is $\overline{x}=x+e^{1,\ldots,n}$. \emph{Antipodal} attractive cycles are those obtained from the attractive cycle
\begin{align*}
& (0,e^1,e^{1,2},\ldots,e^{1,\ldots,n-1},e^{1,\ldots,n},e^{2,\ldots,n},,e^n,0) \\
=\ & (0,e^1,e^{1,2},\ldots,e^{1,\ldots,n-1},\overline{0},\overline{e^1},\overline{e^{1,2}},\ldots,\overline{e^{1,\ldots,n-1}},0)
\end{align*}
by translations and permutations of coordinates. An antipodal attractive cycle is therefore of the form
$$
(x^0,x^1,\ldots,x^n,\overline{x^0},\overline{x^1},\ldots,\overline{x^n},x^0)
$$
where $x^i=x^{i-1}+e^{\sigma(i)}=f(x^{i-1})$ for each $i\in\{1,\ldots,n\}$, and $\sigma$ is some permutation of $\{1,\ldots,n\}$.

\subsection{Interaction graphs}
\label{sec:int}
As the network terminology suggests, a Boolean network $f:\B^n\rightarrow\B^n$ induces directed graphs which represent interactions between its variables $x_1,\ldots,x_n$.

Given $\varphi:\B^n\rightarrow\B$ and $i\in\{1,\ldots,n\}$, the \emph{discrete $i^{th}$ partial derivative} $\partial\varphi/\partial x_i=\partial_i\varphi:\B^n\rightarrow\B$ maps each $x\in\B^n$ to
$$
\partial_i\varphi(x)=\varphi(x)+\varphi(x+e^i),
$$
where the $+$ here is again the addition of the field \B, so that $\partial_i\varphi(x)=1$ if and only if $\varphi(x)\neq\varphi(x+e^i)$. In that case, the influence of variable $x_i$ on $\varphi$ at $x$ is either covariant when the map
$$
\B\rightarrow\B, \ \alpha\mapsto \varphi(x_1,\ldots,x_{i-1},\alpha,x_{i+1},\ldots,x_n)
$$
is increasing, or contravariant when it is decreasing. Given $f:\B^n\rightarrow\B^n$ and $x\in\B^n$, the \emph{discrete Jacobian matrix} $\J(f)(x)$ is the $n\times n$ matrix with entries $\J(f)(x)_{i,j}=\partial_jf_i(x)$.

A \emph{signed directed graph} is a directed graph with a sign, $+1$ or $-1$, attached to each edge, and the \emph{sign} of a cycle (or more generally the sign of a path) is defined to be the product of the signs of its edges.

All cycles considered in the paper will be elementary.

The \emph{interaction graph of $f$ at $x$}, is defined \cite{RRT08} to be the signed directed graph $\G(f)(x)$ on vertex set $\{1,\ldots,n\}$ which has an edge from $j$ to $i$ when $\J(f)(x)_{i,j}=1$, with positive (\resp negative) sign when the influence of $x_j$ on $f_i$ is covariant (\resp contravariant). It is straightforward to verify that the condition for an edge to be positive is equivalent to:
$$
x_j=f_i(x).
$$
The \emph{global interaction graph} $\G(f)$ has the same vertices, and a positive (resp. negative) edge from $j$ to $i$ when for some $x$, $\G(f)(x)$ has. A consequence of this definition is that a global interaction graph may have two edges of opposite signs from some vertex to another. A cycle, or more generally a path, of $\G(f)$ is said to be \emph{local} when it lies in $\G(f)(x)$ for some $x$.

\subsection{And-nets}
In Section \ref{sec:andfp}, we shall be interested in a class of Boolean networks called and-nets. A map $f:\B^n\rightarrow\B^n$ is called an \emph{and-net} when for each $i\in\{1,\ldots,n\}$, $f_i$ is a product of literals, \ie there exist disjoint subsets $P_i$ and $N_i$ of $\{1,\ldots,n\}$ such that
$$
f_i(x)=\prod_{j\in P_i}x_j\prod_{j\in N_i}(x_j+1),
$$
with the convention that the empty product is $1$. Indices in $P_i$ (\resp in $N_i$) are called the positive (\resp negative) \emph{inputs} of $f_i$: they are indeed the vertices $j$ of $\G(f)$ such that $(j,i)$ is a positive (\resp negative) edge of $\G(f)$.

The network of Figure \ref{fig:bn} is an example of and-net. As explained in \cite{RR12}, in the case of and-nets, the global interaction graph $\G(f)$ actually determines $f$ (a statement which obviously does not hold for arbitrary Boolean networks): given a signed directed graph $G$ which is \emph{simple} (\ie without parallel edges), define the and-net $f$ by
$$
f_i(x)=\prod_{(j,i)\in E^+(G)}x_j\prod_{(j,i)\in E^-(G)}(x_j+1),
$$
where $E^+(G)$ (\resp $E^-(G)$) denotes the set of positive (\resp negative) edges of $G$. Then $f$ is the unique and-net such that $\G(f)=G$.

Proposition \ref{prop:andlocal}, which is proved in \cite{RR12}, shows that, for an and-net $f$, locality of a cycle $C$ of $\G(f)$ can be expressed as the absence of certain specific subgraphs of $\G(f)$ called delocalizing triples, the definition of which we recall now. Given a simple signed directed graph $G$ and a cycle $C$ of $G$, a triple $(i,j,k)\in\{1,\ldots,n\}^3$ is said to be a \emph{delocalizing triple} of $C$ when $j,k$ are distinct vertices of $C$ and $(i,j),(i,k)$ are two edges of $G$ that are
\begin{itemize}
\item not edges of $C$,
\item and of different signs.
\end{itemize}
A delocalizing triple $(i,j,k)$ of $C$ is said to be \emph{internal} when $i$ is a vertex of $C$, \emph{external} otherwise. See Figure \ref{fig:deloc}.
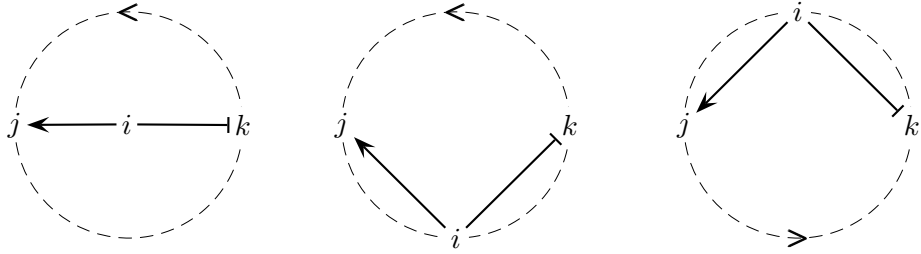
\begin{figure}[!t]
\centering
\begin{pspicture}(12, 3)
\psset{nodesep=2pt,arrowsize=6pt,labelsep=3pt}
\put(0,1.5){\rnode{V1}{$j$}}
\nccircle[angle=-90,linestyle=dashed,linewidth=0.3pt,nodesep=4pt]{-}{V1}{1.5}\ncput*{\rnode{V2}{$k$}}\ncput[npos=.75]{$\pmb{<}$}
\put(1.5,1.47){\rnode{U}{$i$}}
\ncline{->}{U}{V1}
\ncline{-|}{U}{V2}
\put(4.3,1.5){\rnode{VV1}{$j$}}
\nccircle[angle=-90,linestyle=dashed,linewidth=0.3pt,nodesep=4pt]{-}{VV1}{1.5}\ncput*{\rnode{VV2}{$k$}}\ncput*[npos=.25]{\rnode{UU}{$i$}}\ncput[npos=.75]{$\pmb{<}$}
\ncline{->}{UU}{VV1}
\ncline{-|}{UU}{VV2}
\put(8.8,1.5){\rnode{VVV1}{$j$}}
\nccircle[angle=-90,linestyle=dashed,linewidth=0.3pt,nodesep=4pt]{-}{VVV1}{1.5}\ncput*{\rnode{VVV2}{$k$}}\ncput*[npos=.75]{\rnode{UUU}{$i$}}\ncput[npos=.25]{$\pmb{>}$}
\ncline{->}{UUU}{VVV1}
\ncline{-|}{UUU}{VVV2}
\end{pspicture}
\caption{External or internal delocalizing triple of a cycle in a signed directed graph. Usual arrows denote positive edges, while arrows ending with a $\dashv$ denote negative edges.}
\label{fig:deloc}
\end{figure}
\begin{proposition}
\label{prop:andlocal}
Let $f:\B^n\rightarrow\B^n$ be an and-net. Given a cycle $C$ if $\G(f)$, $C$ is local if and only if it has no delocalizing triple \cite{RR12}.
\end{proposition}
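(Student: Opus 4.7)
The plan is to unfold the definition of locality at the level of the and-net formulas. For any edge $(v^-, v)$ of $\G(f)$ (so $v^- \in P_v \cup N_v$), the activation condition $(v^-, v) \in \G(f)(x)$ is equivalent to $\partial_{v^-} f_v(x) = 1$, and from the and-net expression $f_v(x) = \prod_{j \in P_v} x_j \prod_{j \in N_v}(x_j + 1)$ this reduces to the requirement that every other input $c \in (P_v \cup N_v) \setminus \{v^-\}$ satisfies $x_c = 1$ if $c \in P_v$ and $x_c = 0$ if $c \in N_v$. Writing $v^-$ for the $C$-predecessor of $v$, the cycle $C$ therefore lies in $\G(f)(x)$ precisely when, at each $v \in C$, every input $c \neq v^-$ of $f_v$ has $x_c$ pinned by the sign of the edge $(c, v)$.

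With this reformulation, the forward direction is immediate. Assume $C \subseteq \G(f)(x)$ and let $(i, j, k)$ be a delocalizing triple. Then $i$ is an input of both $f_j$ and $f_k$, distinct from $j^-$ and from $k^-$ (since $(i, j)$ and $(i, k)$ are not edges of $C$). The activation conditions at $j$ and at $k$ therefore pin $x_i$ to the value determined by the sign of $(i, j)$ and simultaneously to the value determined by the sign of $(i, k)$; as these two signs differ, this is a contradiction.

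For the backward direction, I plan to construct $x$ directly from the constraints when $C$ has no delocalizing triple. For each index $c$ which occurs as a non-$C$ input of some vertex $v \in C$ — that is, such that $(c, v)$ is an edge of $\G(f)$ with $c \neq v^-$ — set $x_c = 1$ if $(c, v)$ is positive and $x_c = 0$ if it is negative, and set the remaining coordinates of $x$ arbitrarily. This assignment is well-defined: if $c$ were a non-$C$ input of two distinct $v, v' \in C$ via edges of opposite signs, then $(c, v, v')$ would be a delocalizing triple of $C$ (external when $c \notin C$, internal when $c \in C$), and for an and-net the case $v = v'$ is ruled out because $\G(f)$ is simple. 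By the reformulation of the first paragraph, every edge of $C$ is then in $\G(f)(x)$, so $C$ is local.

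The main (and essentially only) subtle step is this consistency check on the induced assignment: the delocalizing-triple hypothesis is exactly what prevents two non-$C$ edges landing on $C$ from a common source $c$ from forcing $x_c$ to be simultaneously $0$ and $1$. Everything else in the argument is a direct unfolding of the and-net definitions.
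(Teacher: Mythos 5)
Your proof is correct. Note that the paper itself does not prove Proposition \ref{prop:andlocal} but only cites \cite{RR12}; your argument --- rewriting the condition $\partial_{v^-}f_v(x)=1$ for a product of literals as ``all other inputs of $f_v$ are pinned to their sign-determined values,'' then observing that the resulting system of constraints on $x$ is inconsistent exactly when two non-$C$ edges of opposite signs from a common source land on $C$, i.e.\ exactly when a delocalizing triple exists --- is the standard one and is essentially the proof given in the cited reference. The two small points that need (and receive) attention, namely that simplicity of $\G(f)$ rules out $i=j^-$ in the forward direction and $v=v'$ in the consistency check, are handled correctly.
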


\subsection{Statement of results}
\label{sec:known}
Let $f:\B^n\rightarrow\B^n$ be a Boolean network. Shih and Dong have proved in \cite{SD05} that if $\G(f)$ has no local cycle, then $f$ has a unique fixed point. But the sign of cycles has an influence of the dynamics too. For instance, \cite{RMCT03} shows that when the interaction graph $\G(f)(x)$ is independent of $x$ and consists in a positive (\resp negative) cycle with no other edge, $f$ has $2$ fixed points and no cyclic attractor (\resp $f$ has no fixed point and a unique attractive cycle). So, in this somehow elementary case, the dynamics associated with positive and negative cycles are very different.

Based on these results, \cite{RRT08} proved that, for an arbitrary network $f$, if $\G(f)$ has no local positive cycle, then $f$ has at most one fixed point. Moreover, \cite{RRT08} proved that if $f$ has an attractive cycle, then $\G(f)$ has a (global) negative cycle. This motivated interest in investigating dynamical properties related to local negative cycles, suggesting in particular that the absence of a local negative cycle might imply the existence of a fixed point, or some related property.

The following theorem reviews the known partial results on negative cycles.
\begin{theorem}
\label{th:knownneg}
Let $f:\B^n\rightarrow\B^n$ be a Boolean network.
\begin{enumerate}
\item If $f$ has an attractive cycle, then $\G(f)$ has a negative cycle \cite{RRT08}.
\item If $f$ has a cyclic attractor (in particular if $f$ has no fixed point), then $\G(f)$ has a negative cycle \cite{Ric10}.
\item If $f$ is non-expansive ($d(f(x),f(y))\leqslant d(x,y)$ for all $x,y$) and has no fixed point, then $\G(f)$ has a local negative cycle \cite{Ric11}.
\item If $f$ is an and-net and has no fixed point, then $\G(f)$ has an internally local negative cycle (a negative cycle without internal delocalizing triple) \cite{RR12}.
\item If $f$ is an and-net and has an antipodal attractive cycle, then $\G(f)$ has a local negative cycle \cite{Rue14}.
\end{enumerate}
\end{theorem}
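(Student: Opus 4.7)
The statement is a compilation of five results from five different papers, so a proof proposal is really a bundle of five strategies. A common theme is to start from the dynamical hypothesis (an attractive cycle, cyclic attractor, or absence of fixed point), then to track sign parities along a well-chosen path or trajectory and use combinatorial arguments to extract a negative cycle in $\G(f)$ of the required locality.

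For (1), I would exploit the deterministic nature of an attractive cycle $\theta = (x^0, x^1, \ldots, x^p = x^0)$: at each $x^k$, the unique degree of freedom gives an update coordinate $\sigma(k)$, and the signs of the edges in $\G(f)(x^k)$ can be read off by comparing the coordinates of $x^k$ and $f(x^k)$ using the characterization $x_j = f_i(x)$ for positive edges. The key observation is that going once around $\theta$ flips each coordinate an even number of times, and this parity can only be realized by a negative cycle in $\G(f)$. For (2), the trajectory through a cyclic attractor is no longer deterministic, so one has to select a suitable cycle inside the strongly connected component and run a similar parity argument; the obstacle here is that there is no canonical choice of path and the sign bookkeeping becomes more delicate.

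For (3), the non-expansiveness hypothesis provides enough rigidity to localize the argument: minimizing $d(x, f(x))$ should produce a state $x^*$ whose structure either exhibits a fixed point (contradicting the hypothesis) or forces a local negative cycle in $\G(f)(x^*)$, because non-expansiveness tightly constrains how degrees of freedom propagate around $x^*$. For (4), I would invoke Proposition \ref{prop:andlocal}: in an and-net without fixed point, one builds a candidate negative cycle from the literal structure of the $f_i$'s, and the absence-of-fixed-point hypothesis should rule out internal delocalizing triples. For (5), the antipodal structure of the attractive cycle invites the use of the isometry $x \mapsto \overline{x}$ of $\B^n$, which pairs states on the cycle and should force a local negative cycle via symmetry combined again with Proposition \ref{prop:andlocal}.

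The main obstacle, common to all five parts, is the sign accounting: one must track both the combinatorial shape of a cycle in $\G(f)$ and the product of signs along it, and these two pieces of information interact nontrivially with the update structure $\Gamma(f)$. The delocalizing-triple formalism offers a clean bridge in the and-net case, but for arbitrary Boolean networks (parts 1, 2, 3) the sign has to be extracted from the raw definitions through a more involved case analysis, and the passage from a \emph{global} negative cycle (parts 1 and 2) to a \emph{local} one (part 3) is the most subtle step, requiring the non-expansiveness hypothesis to confine the combinatorial witness to a single graph $\G(f)(x)$.
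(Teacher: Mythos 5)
This theorem is a survey of five results, each attributed to an external reference; the paper does not prove it but only cites \cite{RRT08,Ric10,Ric11,RR12,Rue14}, supplementing point 5 with a short self-contained argument in Section \ref{sec:andfp}. Your proposal, by contrast, attempts to sketch proofs, so it must be judged as such --- and as proofs all five sketches are incomplete: each one ends precisely where the real work begins, with an unestablished claim flagged by ``should'' or ``can only be realized by''. In (1), the assertion that the even number of flips of each coordinate around $\theta$ ``can only be realized by a negative cycle in $\G(f)$'' \emph{is} the theorem; nothing in your sketch explains how to assemble edges of the local graphs $\G(f)(x^k)$ into a single cycle of $\G(f)$, nor why its sign must be negative (the relevant tool is the parity relation of Lemma \ref{lem:sgnpar}, but one still needs a concatenation argument to produce the cycle). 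In (2) you explicitly name the obstacle (no canonical cycle inside the attractor) without overcoming it. In (3), minimizing $d(x,f(x))$ is not obviously the right device; the argument in \cite{Ric11} (echoed in the paper's Remark \ref{th:nonexpneg}) works instead with a \emph{minimal subcube} carrying a cyclic attractor and produces two states $x,y$ with $x_i\neq y_i$ and $f(x)+x=f(y)+y=e^i$, from which the local negative cycle is extracted; your sketch supplies no mechanism for the final extraction. In (4), ``the absence-of-fixed-point hypothesis should rule out internal delocalizing triples'' is asserted without any construction of the candidate cycle or any use of the kernel correspondence that underlies \cite{RR12}.

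Point 5 is the one place where the paper gives an actual argument, and your sketch misses its key step. The paper shows that an and-net with an antipodal attractive cycle has $\G(f)$ equal to a \emph{chordless} Hamiltonian negative cycle: a chord $(i,j)$ would force either a second degree of freedom at some $e^{1,\ldots,i}$ or the loss of the degree of freedom $j$ at $e^{1,\ldots,j-1}$, contradicting attractivity; locality then follows from the fact that chordless Hamiltonian cycles are local \cite{RR08} (equivalently, with no edges outside $C$ there can be no delocalizing triple, so Proposition \ref{prop:andlocal} applies vacuously). Your appeal to the antipodal isometry $x\mapsto\overline{x}$ plus Proposition \ref{prop:andlocal} could be made to work, but only after establishing chordlessness, which is exactly the step your sketch omits. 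In summary: the strategies you outline point in broadly reasonable directions, but none of the five parts is proved, and the theorem should in any case be read as a citation of prior work rather than a claim the paper itself establishes.
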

On the other hand, as mentioned in the Introduction, in the more general discrete case (for maps from a finite set $\{0,\ldots,d\}^n$ to itself, with analogous definitions of local interaction graphs), \cite{Ric10} shows that, even for $d=3$ and $n=2$, there exists a network with no local negative cycle, no fixed point and an attractive cycle.

The main results of this paper are the following two theorems proved in Sections \ref{sec:andfp} and \ref{sec:attcyc}.

\begin{thma}
There exist and-nets with no local negative cycle and no fixed point (hence with a cyclic attractor).
\end{thma}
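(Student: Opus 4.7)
My plan is to build an explicit and-net $f:\B^{12}\rightarrow\B^{12}$ with no fixed point and then certify, via Proposition \ref{prop:andlocal}, that every negative cycle of $\G(f)$ carries at least one delocalizing triple. As a seed I would take a minimal fixed-point-free and-net, namely the ``negative triangle'' given by $f_i(x)=x_{i-1}+1$ with indices taken modulo $3$: a parity argument around the triangle rules out fixed points. However this seed on its own is insufficient, because the unique negative $3$-cycle of $\G(f)$ has no other edge to serve as a delocalizing triple, so it is trivially local. The task is therefore to thicken the $3$-vertex skeleton by auxiliary ``expander'' vertices so that every negative cycle of the enlarged graph picks up a delocalizing triple, while preserving the absence of fixed points.

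Concretely, following the expanding-and-nets idea of Section \ref{sec:qdf}, I would attach to the skeleton a family of auxiliary vertices, each receiving one positive and one negative edge from two suitably chosen skeleton vertices. The design constraint is dual: on the one hand, each auxiliary vertex $v$ with incoming edges of opposite signs from $j,k$ supplies a potential external delocalizing triple $(v,j,k)$ for every cycle through $j,k$ not using the edges $(j,v),(k,v)$; on the other hand, $v$ must be configured so that its own dynamics does not give rise to a new locally realizable negative cycle, and so that in a fixed-point candidate $v$ is forced by the skeleton. The announced dimension $n = 12 = 3 + 9$ suggests three skeleton vertices and nine expanders, roughly one per ordered pair of skeleton vertices together with further copies needed to handle the internal delocalizing triples of the triangle itself.

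Once $f$ is written down, the verification splits into two independent combinatorial checks. For no fixed point, I would argue that any fixed point of $f$ restricts to a fixed point of the skeleton, contradicting the parity obstruction on the negative triangle. For no local negative cycle, I would enumerate the negative cycles of $\G(f)$, namely the original triangle plus each new cycle created by using expander vertices, and exhibit for each a delocalizing triple, external in most cases, built from a pair of opposite-sign edges incident to an auxiliary vertex.

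I expect the main obstacle to be the simultaneous control of these two properties. Every edge added to the seed is both a threat (it may create a new negative cycle that is itself local) and a resource (it may delocalize an older cycle). The core of the argument therefore has to be the expansion lemma of Section \ref{sec:qdf}, which should package a systematic procedure guaranteeing that the new negative cycles introduced by the expansion all admit delocalizing triples of their own, so that the only way to produce a local negative cycle would be to remain within the skeleton, whose cycle is delocalized by design.
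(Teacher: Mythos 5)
Your high-level strategy --- seed with a fixed-point-free negative and-net, expand it so that every negative cycle acquires a delocalizing triple, invoke Proposition \ref{prop:andlocal} for locality and preservation of fixed points under expansion --- is exactly the paper's strategy. But the proposal has a genuine gap at its starting point: the negative triangle cannot serve as the seed. The whole delocalization machinery of Section \ref{sec:qdf} requires each negative cycle $C$ of the seed to have a \emph{chord} ($\chi_1(C)$ must be a chord of $C$), and the triangle $f_i(x)=x_{i-1}+1$ is a chordless Hamiltonian negative cycle, which is precisely the configuration shown in Section \ref{sec:andfp} to be unavoidably local. To delocalize it you would need a triple $(v,j,k)$ with two \emph{outgoing} edges $(v,j),(v,k)$ of opposite signs into two triangle vertices (note that your proposal has the orientation backwards: the apex of a delocalizing triple sends edges into the cycle, i.e.\ $v$ becomes a new input of $f_j$ and of $f_k$, one of them positive). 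Adding such inputs modifies the skeleton coordinates themselves, so it is not an expansion in the sense of Section \ref{sec:red}; Proposition \ref{prop:red} no longer applies, your claim that ``any fixed point of $f$ restricts to a fixed point of the skeleton'' is unjustified (a positive input $v$ to $f_j$ lets $v=0$ force $f_j=0$ independently of the triangle), and the parity obstruction evaporates. This is why the paper's seed is instead the $4$-vertex all-negative and-net of Figure \ref{fig:ce}, a kernel-free graph whose four negative cycles $C_i=(i,i+1,i+2,i)$ \emph{each already have a chord} $(i,i+2)$; the chords supply the quasi-delocalizing function $\chi_1(C_i)=(i,i+2)$, $\chi_2(C_i)=(i,i+1)$, and two rounds of edge subdivision (Lemma \ref{trick}) give the $12=4+4+4$ dimensional $g$, not $3+9$.

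A second, smaller inaccuracy: you describe the expansion as delocalizing ``the skeleton cycle by design'' while separately checking the ``new'' negative cycles. In the paper's mechanism the seed's cycles are no longer cycles of $\G(g)$ at all (their edges have been subdivided); what one shows is that \emph{every} negative cycle of $\G(g)$ lies above a negative cycle of $S$ in $\G(f)$ and is delocalized by a triple with apex one of the new vertices $i'$ or $i''$. Also note that, by point~4 of Theorem \ref{th:knownneg}, any fixed-point-free and-net must retain a negative cycle with no \emph{internal} delocalizing triple, so all the work necessarily goes into arranging \emph{external} triples; your plan does not engage with this constraint, whereas the paper's $g$ exhibits it explicitly (e.g.\ the cycle $(0,5,1,7,2,0)$ with external triple $(4,5,2)$).
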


\begin{thmb}
There exist Boolean networks with no local negative cycle and an attractive cycle.
\end{thmb}

\section{First construction: And-nets without fixed point}
\label{sec:andfp}

This section is devoted to the proof of Theorem A.

\subsection{Chords}
Let us start with a remark on point $5$ of Theorem \ref{th:knownneg}.

If an and-net $f$ has an antipodal attractive cycle $\theta$, we may assume that $\theta$ is $(0,\ldots,e^{1,\ldots,n-1},\overline{0},\ldots,\overline{e^{1,\ldots,n-1}},0)$ up to translation and a permutation of coordinates, so that $\G(f)$ has a negative cycle $C=(1,2,\ldots,n,1)$: \cite{Rue14} proves that this cycle is local, but it is easy to observe that it is actually chordless. Indeed, if $C$ has a negative chord $(i,j)$, then $f_j(x)=0$ as soon as $x_i=1$.
\begin{itemize}
\item Now, if $j\leqslant i$, then $e^{1,\ldots,i}_j=1$. Since $e^{1,\ldots,i}_i=1$ as well, by the above remark, $f_j(e^{1,\ldots,i})=0$, hence $i+1$ and $j$ are degrees of freedom of $e^{1,\ldots,i}$. Since $(i,j)$ is a chord, $j\neq i+1\text{ mod } n$, and $e^{1,\ldots,i}$ has at least two degrees of freedom.
\item Otherwise $i\leqslant j-1$, so $e^{1,\ldots,j-1}_i=1$, and $f_j(e^{1,\ldots,j-1})=0$ by the above remark. Since $e^{1,\ldots,j-1}_j=0$ too, $j$ is not a degree of freedom of $e^{1,\ldots,j-1}$.
\end{itemize}
In both cases, we have a contradiction with the hypothesis that $\theta$ is an atractive cycle, and a similar argument applies for a positive chord at $\overline{e^{1,\ldots,i}}$.

Conversely, it is clear that if $\G(f)$ is a Hamiltonian negative cycle, then $f$ is an and-net and has an antipodal attractive cycle. We thus have the following:
\begin{remark}
$f$ is an and-net with an antipodal attractive cycle if and only if $\G(f)$ is a (chordless) Hamiltonian negative cycle.
\end{remark}
Since chordless Hamiltonian cycles are local \cite{RR08}, this entails point $5$ of Theorem \ref{th:knownneg}.

Now, remember that a \emph{kernel} of a directed graph is an independent and absorbent set of vertices \cite{vnM44,Ber70}, and that fixed points of a \emph{negative} and-net (all edges negative) are in one-to-one correspondence with kernels of the transpose of the underlying directed graph \cite{RR12}. It is well-known that there exist graphs $G$ without kernel such that every odd cycle of $G$ has as many chords as desired: \cite{GS82} defines, for every $k\geqslant 2$, a graph without kernel whose odd cycles all have at least $k$ chords. The graph we shall use as a seed for constructing the counter-example below is actually simpler than the ones defined in \cite{GS82}: every odd cycle of our graph has a single chord (Figure \ref{fig:ce}).

These graphs correspond to negative and-nets without fixed point, whose negative cycles all have chords. Is it true that if all negative cycles have a chord, they are non local? Of course not in general, but we show below that in the case of negative and-nets, if these chords are suitably distributed (existence of a quasi-delocalizing function, see Definition \ref{def:qdf}), the network can be deformed (through expansion to be defined below) so as to delocalize all negative cycles. We first recall the definitions of expansion and reduction (Section \ref{sec:red}), and then explain the delocalization process (Section \ref{sec:qdf}). Since, as we shall see, expansion preserves fixed points, we shall end up with and-nets with no local negative cycle and no fixed point, as claimed by Theorem A.

\subsection{Reduction and expansion}
\label{sec:red}
We adapt the definition of \cite{NRTC11} to our notation.

If $f:\B^n\rightarrow\B^n$ is a Boolean network whose global interaction graph $\G(f)$ has no \emph{loop} on $n$ (no edge $(n,n)$), it is possible to define a \emph{reduced} Boolean network $f':\B^{n-1}\rightarrow\B^{n-1}$ by substitution:
$$
f'(x) = f(x,f_n(x,0)) = f(x,f_n(x,1))
$$
for each $x\in\B^{n-1}$, because the hypothesis on $\G(f)$ entails $f_n(x,0)+f_n(x,1)=\partial_nf(x,0)=\partial_nf(x,1)=0$. We shall therefore write $f'(x)=f(x,f_n(x,-))$. If $f$ reduces to $f'$, we shall also say that $f$ is \emph{expanded} from $f'$. For any $x\in\B^{n-1}$, let
$$
x'=(x,f_n(x,-))\in\B^n,
$$
so that $f'(x)=f(x')$. If $\pi:\B^n\rightarrow\B^{n-1}$ is the projection, then clearly, $\pi(x')=x$.

In the above definition, for simplicity, we have considered only reductions obtained by substituting variable $x_n$, but reductions over any variable $x_i$ is obviously possible and will be considered later in the paper.

The following is proved in \cite{NRTC11}.
\begin{proposition}
\label{prop:red}
Let $f:\B^n\rightarrow\B^n$ be a Boolean network whose global interaction graph $\G(f)$ has no loop on $n$.
\begin{enumerate}
\item Fixed points are preserved by reduction and expansion: $x$ is a fixed point of $f'$ if and only if $x'$ is a fixed point of $f$.
\item Attractive cycles are preserved by reduction: $\pi$ maps attractive cycles of $f$ to attractive cycles of $f'$.
\end{enumerate}
\end{proposition}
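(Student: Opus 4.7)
The plan is to handle the two parts separately: part 1 follows from a one-line substitution, while part 2 needs a small case analysis on the projected trajectory.

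For part 1, the key observation is that the absence of a loop $(n,n)$ in $\G(f)$ makes $f_n(x,\alpha)$ independent of $\alpha$, so writing $\alpha=f_n(x,-)$ and $x'=(x,\alpha)$ one gets $f_n(x')=\alpha=x'_n$ automatically. Hence $f(x')=x'$ is equivalent to $f_i(x')=x_i$ for every $i<n$, which is precisely the coordinate-wise formulation of $f'(x)=x$, and the equivalence in both directions follows.

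For part 2, let $\theta=(y^0,y^1,\dots,y^{k-1})$ be an attractive cycle of $f$ and let $i_j$ denote the unique degree of freedom of $y^j$. I would call $y^j$ \emph{stable} when $y^j=(\pi(y^j))'$, \ie\ when $f_n(y^j)=y^j_n$, and \emph{unstable} otherwise. Since $\theta$ is attractive, $y^j$ is unstable if and only if $i_j=n$. Using that $f_n$ does not depend on coordinate $n$, an $n$-flip at an unstable $y^j$ yields $y^{j+1}_n=f_n(y^j)=f_n(y^{j+1})$, so $y^{j+1}$ is stable and two unstable points cannot be consecutive, which in particular rules out cycles consisting only of unstable points. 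Let $\tilde\theta$ be the subsequence of projections $\pi(y^j)$ for stable $j$.

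The rest of the argument is to check that $\tilde\theta$ is an attractive cycle of $f'$. For a stable $y^j$, $f'(\pi(y^j))=\pi(f(y^j))=\pi(y^{j+1})$: if $y^{j+1}$ is also stable, this differs from $\pi(y^j)$ exactly in coordinate $i_j<n$; if $y^{j+1}$ is unstable, the subsequent step flips coordinate $n$ and yields the next stable $y^{j+2}$ with $\pi(y^{j+2})=\pi(y^{j+1})$, so $f'(\pi(y^j))$ still differs from $\pi(y^j)$ in the single coordinate $i_j<n$. Distinctness of the stable projections is automatic: two stable points with the same projection satisfy $y_n=f_n(\pi(y),-)$, which determines $y_n$ from $\pi(y)$, hence they coincide in $\theta$. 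The main bookkeeping subtlety is the gap-of-two step through an unstable point, but the isolation property above keeps the case analysis clean.
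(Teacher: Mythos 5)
Your proof is correct. Note that the paper itself offers no proof of Proposition~\ref{prop:red} --- it simply cites \cite{NRTC11} --- so there is nothing to compare against; your argument (the substitution identity $f_n(x')=x'_n$ for part~1, and the stable/unstable dichotomy with the observation that an $n$-flip is always immediately followed by a stable point for part~2) is the standard one and fills that gap correctly, including the needed distinctness of the projected stable points.
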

Notice that attractive cycles are not preserved by expansion: for instance, $f:\B^3\rightarrow\B^3$ defined by $f(x_1,x_2,x_3)=(x_2+1,x_1,x_1+x_2)$ has no attractive cycle, but reduces to $(x_1,x_2)\mapsto(x_2+1,x_1)$ which clearly has one. It is also observed in \cite{NRTC11} that arbitrary cyclic attractors are not generally preserved by reduction. It is not difficult to show that they are not preserved by expansion either.

\subsection{Quasi-delocalizing functions}
\label{sec:qdf}
\begin{definition}
\label{def:qdf}
Let $f$ be a negative and-net and $S$ be a set of cycles of $\G(f)$. An \emph{$S$-quasi-delocalizing function} of $f$ is a function $\chi$ from $S$ to the set of pairs of edges of $\G(f)$ such that:
\begin{itemize}
\item $\chi_1(C)$ is a chord $(i,k)$ of $C$;
\item $\chi_2(C)$ is an edge $(i,j)$ of $C$;
\item $\im(\chi_1)\cap\im(\chi_2)=\varnothing$.
\end{itemize}
\end{definition}
Note that in this definition, $\chi_2(C)$ is determined by $\chi_1(C)$: it is the unique edge of $C$ starting from the same vertex as $\chi_1(C)$. But it is more convenient to define $\chi(C)$ as a pair of edges.
\begin{lemma}
\label{trick}
Let $f$ be a negative and-net and $S$ be a set of cycles of $\G(f)$. If $f$ has an $S$-quasi-delocalizing function, then $f$ can be expanded to an and-net $g$ such that every cycle of $\G(g)$ above a cycle of $S$ is delocalized.
\end{lemma}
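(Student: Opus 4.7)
The plan is to produce $g$ by adding, for each $C\in S$, a new Boolean variable $v_C$ tied to the data $\chi(C)=((i_C,k_C),(i_C,j_C))$, and to modify the components $g_{j_C}$ and $g_{k_C}$ in a way that preserves the reduction $g\mapsto f$ while introducing the positive edges needed to form a delocalizing triple of each lifted cycle. Concretely, I would set $g_{v_C}(x)=x_{i_C}+1$, so that $(i_C,v_C)$ becomes a negative edge of $\G(g)$; I would then replace the factor $(x_{i_C}+1)$ of $f_{j_C}$ by $x_{v_C}$ in $g_{j_C}$ (removing the edge $(i_C,j_C)$ from $\G(g)$ and adding the positive edge $(v_C,j_C)$), and apply the symmetric modification to $g_{k_C}$ (removing the chord $(i_C,k_C)$ and adding a positive edge $(v_C,k_C)$). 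Because $g_{v_C}$ does not depend on $x_{v_C}$ and substituting $x_{v_C}=x_{i_C}+1$ restores each deleted factor, $g$ is an and-net that reduces to $f$.

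A cycle $C'$ of $\G(g)$ above $C$ must then use the path $i_C\to v_C\to j_C$ in place of $(i_C,j_C)$, so $v_C\in C'$ and $k_C\in C'$. The goal is to exhibit a delocalizing triple of $C'$ at $v_C$: the positive edge $(v_C,j_C)$ lies in $C'$, while the positive edge $(v_C,k_C)$ lies outside $C'$. To obtain the required \emph{opposite}-sign partner --- which is not automatic, since out-edges of a single new vertex produced by the basic expansion share a common sign --- I would add, when necessary, a companion new variable $w_C$ with $g_{w_C}=x_{i_C}$ (giving a positive edge $(i_C,w_C)$) and fold it into $g_{k_C}$ through a factor $(x_{w_C}+1)$, producing a negative edge $(w_C,k_C)$ that still reduces correctly. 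A pair such as the negative chord involving $w_C$ together with the positive edge $(v_C,k_C)$, or an analogous pair at $i_C$ combining a preserved negative chord with a new positive edge to a vertex of $C'$, then furnishes the delocalizing triple.

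The condition $\im(\chi_1)\cap\im(\chi_2)=\varnothing$ is exactly what allows this construction to be carried out one cycle at a time: a chord used to build a triple for one $C\in S$ is never also the cycle-edge that is deleted to lift another $C'\in S$ through a new vertex, so the modifications associated to distinct cycles of $S$ act on disjoint edges of $\G(f)$ and cannot undo one another. The main obstacle, which will be addressed in the second paragraph above, is verifying that the expansion really produces a delocalizing triple for every cycle of $\G(g)$ above an $S$-cycle: since $\G(f)$ is entirely negative and the basic expansion fixes the sign of all out-edges of each new vertex, the existence of the required opposite-sign pair at a common source must be engineered by choosing carefully which factor to replace by which new variable, exploiting the pairing (chord, cycle-edge) at the common source $i_C$ that the quasi-delocalizing function provides.
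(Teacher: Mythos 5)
There is a genuine gap: your construction does not actually produce a delocalizing triple for the lifted cycles, and the repair you sketch in the second paragraph does not match the definition of a delocalizing triple. Recall that a delocalizing triple of a cycle $C'$ is a \emph{single source} $u$ with two edges $(u,v_1),(u,v_2)$ of \emph{different signs}, neither lying on $C'$, pointing to two \emph{distinct} vertices of $C'$. In your construction the new vertex $v_C$ has exactly two out-edges, $(v_C,j_C)$ and $(v_C,k_C)$, and both are positive; moreover $(v_C,j_C)$ lies on the lifted cycle. So $v_C$ cannot be the source of a triple. The companion vertex $w_C$ has a single out-edge $(w_C,k_C)$, so it cannot be a source either, and the pair you invoke --- the negative edge $(w_C,k_C)$ together with the positive edge $(v_C,k_C)$ --- has two different sources and a common target, which is the opposite of what a delocalizing triple requires. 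Finally, $i_C$ does not work as a source: you have deleted its negative chord to $k_C$, its only new positive out-edge goes to $w_C$, which is not a vertex of the lifted cycle, and $(i_C,v_C)$ is both negative and on the cycle. Since all remaining edges of $\G(g)$ inherited from the negative and-net $f$ are negative, no vertex of $\G(g)$ carries two differently-signed out-edges into the lifted cycle, and the lifted cycles remain local whenever $C$ had no other delocalizing structure.

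The paper's construction is arranged precisely to manufacture such a source. The chord $(i,k)$ is \emph{kept} as a negative edge; only the cycle edge $(i,j)$ is subdivided, into a \emph{positive} edge $(i,i')$ followed by a negative edge $(i',j)$; then a second new vertex $i''$ is added with positive edges $(i,i''),(i'',i')$ and a negative edge $(i'',k)$. A cycle of $\G(g)$ above $C$ then enters $i'$ either directly via $(i,i')$ or via $(i,i''),(i'',i')$. In the first case $i''$ is off the cycle and $(i'',i')$ (positive), $(i'',k)$ (negative) form a triple with source $i''$; in the second case $(i,i')$ is no longer a cycle edge and $(i,i')$ (positive), $(i,k)$ (negative) form a triple with source $i$. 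The disjointness $\im(\chi_1)\cap\im(\chi_2)=\varnothing$ is used to guarantee that the chords in $\im(\chi_1)$ survive the first expansion step as genuine negative edges, not (as in your reading) merely to keep the per-cycle modifications independent. To fix your proof you would need to redesign the expansion so that some new or old vertex ends up with a positive and a negative out-edge to two distinct vertices of every lifted cycle, with neither edge on that cycle; the single-vertex-per-cycle gadget you propose cannot achieve this.
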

\begin{figure}[!t]
\centering
\begin{pspicture}(12, 3)
\psset{nodesep=2pt,arrowsize=6pt,labelsep=3pt}
\put(0,1.5){\rnode{V1}{$j$}}
\nccircle[angle=-90,linestyle=dashed,linewidth=0.3pt,nodesep=4pt]{-}{V1}{1.5}\ncput*[npos=.40]{\rnode{V2}{$k$}}\ncput*[npos=.75]{\rnode{U}{$i$}}\ncput[npos=.25]{$\pmb{>}$}
\ncline{-|}{U}{V2}
\ncarc[arcangleA=-45,arcangleB=-41]{-|}{U}{V1}
\put(4.3,1.5){\rnode{VV1}{$j$}}
\nccircle[angle=-90,linestyle=dashed,linewidth=0.3pt,nodesep=4pt]{-}{VV1}{1.5}\ncput*[npos=.40]{\rnode{VV2}{$k$}}\ncput*[npos=.75]{\rnode{UU}{$i$}}\ncput[npos=.25]{$\pmb{>}$}\ncput*[npos=.875]{\rnode{VV1a}{$i'$}}
\ncline{-|}{UU}{VV2}
\ncarc[arcangleA=-17,arcangleB=-15]{->}{UU}{VV1a}
\ncarc[arcangleA=-15,arcangleB=-15]{-|}{VV1a}{VV1}
\put(8.8,1.5){\rnode{VVV1}{$j$}}
\nccircle[angle=-90,linestyle=dashed,linewidth=0.3pt,nodesep=4pt]{-}{VVV1}{1.5}\ncput*[npos=.40]{\rnode{VVV2}{$k$}}\ncput*[npos=.75]{\rnode{UUU}{$i$}}\ncput[npos=.25]{$\pmb{>}$}\ncput*[npos=.875]{\rnode{VVV1a}{$i'$}}
\ncline{-|}{UUU}{VVV2}
\ncarc[arcangleA=-17,arcangleB=-15]{->}{UUU}{VVV1a}
\ncarc[arcangleA=-15,arcangleB=-15]{-|}{VVV1a}{VVV1}
\put(10.1,1.5){\rnode{VVV1b}{$i''$}}
\ncline[nodesepB=2pt]{->}{UUU}{VVV1b}
\ncline[nodesepA=1pt,nodesepB=0pt]{->}{VVV1b}{VVV1a}
\ncarc[arcangleA=-30,arcangleB=-30,nodesepA=0pt,nodesepB=4pt]{-|}{VVV1b}{VVV2}
\end{pspicture}
\caption{The trick of Lemma \ref{trick}.}
\label{fig:trick}
\end{figure}
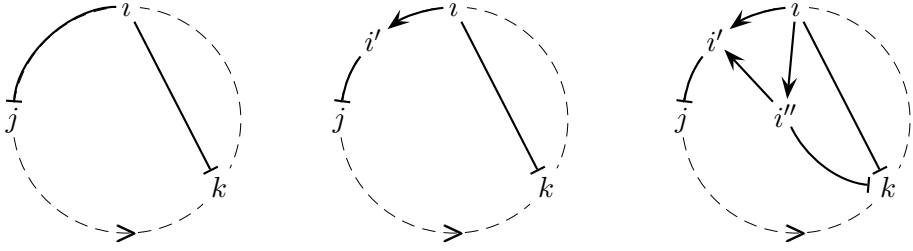
\begin{proof}
Let $\chi$ be an $S$-quasi-delocalizing function of $f$. We proceed in two steps, as illustrated in Figure \ref{fig:trick}. We first define an and-net $f'$ by replacing in $\G(f)$ each edge $(i,j)\in\im(\chi_2)$ by two edges $(i,i'), (i',j)$, where $i'$ is a new vertex, $(i,i')$ is positive and $(i',j)$ is negative. Since $\im(\chi_1)\cap\im(\chi_2)=\varnothing$, $\im(\chi_1)$ is a set of negative egdes of $f'$.

We then define $g$ by adding to $f'$, for each $(i,k)\in\im(\chi_1)$, three edges $(i,i''),(i'',i'),(i'',k)$, where $i''$ is a new vertex, $(i,i''),(i'',i')$ are positive and $(i'',k)$ is negative.

Now, $f'$ reduces to $f$ and $g$ reduces to $f'$, so these two steps are expansions, as required. Finally, let
$$
C=(i,j,P,k,Q,i)
$$
be a cycle of $S$, where $\chi(C)=((i,k),(i,j))$ and $P,Q$ are paths in $\G(f)$. A cycle of $\G(g)$ which is above $C$ (Section \ref{sec:red}) contains the vertices $i,j,k$ and is of the form
$$
\text{either } (i,i',j,P',k,Q',i) \text{ or } (i,i'',i',j,P',k,Q',i),
$$
for some paths $P',Q'$ in $\G(g)$. Note that $P',Q'$ may differ from $P,Q$ because other edges than $(i,k)$ and $(i,j)$ may belong to $\im(\chi_1)\cup\im(\chi_2)$.

In the first case, the cycle is delocalized by the triple $(i'',i',k)$. In the second case, it is delocalized by $(i,i',k)$.
\end{proof}
\begin{lemma}
\label{trickpol}
Let $f$ be a negative and-net and $S$ be the set of positive (\resp negative) cycles of $\G(f)$. If $f$ has an $S$-quasi-delocalizing function, then $f$ can be expanded to an and-net without local positive (\resp negative) cycle.
\end{lemma}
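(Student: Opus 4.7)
The plan is to invoke Lemma \ref{trick} to produce an expansion $g$ of $f$, and then to verify that every positive (resp.\ negative) cycle of $\G(g)$ is actually above some cycle of $S$, so that Lemma \ref{trick} itself supplies a delocalizing triple. I treat the positive case; the negative one is analogous.

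The main construction is a projection $\pi$ from cycles of $\G(g)$ to cycles of $\G(f)$ that contracts every traversal of a new vertex. Since each $i'$ has only $(i,i')$ and $(i'',i')$ as incoming and $(i',j)$ as outgoing edges, and each $i''$ has only $(i,i'')$ incoming and $(i'',i'),(i'',k)$ outgoing, any visit of an elementary cycle $C'$ of $\G(g)$ to a new vertex falls into exactly one of three subpath patterns: $i\to i'\to j$, $i\to i''\to k$, or $i\to i''\to i'\to j$. I contract each such subpath to the corresponding $\G(f)$-edge $(i,j)$, $(i,k)$, or $(i,j)$, which produces an elementary cycle $\pi(C')$ of $\G(f)$ on old vertices (elementarity is preserved because each old vertex is visited at most once in $C'$).

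A direct sign computation $(+)(-)=-$, $(+)(-)=-$, $(+)(+)(-)=-$ shows that each contraction replaces a $\G(g)$-subpath by an edge of $\G(f)$ of the same sign. Since $f$ is a negative and-net, every $\G(f)$-edge is negative, so $\pi$ preserves sign, and if $C'$ is positive then $\pi(C')$ is a positive cycle of $\G(f)$, i.e.\ an element of $S$. In particular, $C'$ cannot avoid all new vertices, for otherwise $C'=\pi(C')\in S$ would be a cycle of $\G(g)$ using no edge of $\im(\chi_2)$, contradicting the fact that $\chi_2(\pi(C'))$ is an edge of $\pi(C')$ lying in $\im(\chi_2)$.

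Writing $\chi(\pi(C'))=((i,k),(i,j))$, the removal of the edge $(i,j)$ from $\G(g)$ forces $C'$ to traverse the corresponding vertex $i'$, either from $i$ directly or through $i''$. After cyclic rotation, $C'$ has precisely the form $(i,i',j,P',k,Q',i)$ or $(i,i'',i',j,P',k,Q',i)$ treated in the proof of Lemma \ref{trick}, and that argument supplies the external triple $(i'',i',k)$ or the internal triple $(i,i',k)$ as a delocalization of $C'$. The main obstacle I foresee is the bookkeeping that ensures the enumeration of subpath patterns through $i'$ and $i''$ is exhaustive and that $\pi(C')$ is genuinely elementary; once this is pinned down, sign preservation and the delocalization step are immediate.
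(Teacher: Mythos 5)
Your proof is correct and follows essentially the same route as the paper: apply Lemma \ref{trick}, then observe that every positive (resp.\ negative) cycle of $\G(g)$ is above a cycle of $\G(f)$ of the same sign, hence above a cycle of $S$, hence delocalized. The only difference is that you spell out in full the projection $\pi$ and its sign-preservation, which the paper leaves implicit in the single sentence ``each positive (resp.\ negative) cycle of $\G(g)$ is above some cycle of $\G(f)$ with the same sign.''
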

\begin{proof}
Let $g$ be the and-net given by expansion of $f$ in Lemma \ref{trick}. Each positive (\resp negative) cycle of $\G(g)$ is above some cycle of $\G(f)$ with the same sign, thus above a cycle of $S$: it is therefore delocalized.
\end{proof}

\subsection{Definition of a counter-example}
\label{sec:dfcount}
Let us now prove Theorem A.

\begin{figure}[!t]
\centering
\psset{unit=10mm}
\begin{pspicture}(-9,-3)(3,3)
\psset{nodesep=2pt,arrows=->,arrowsize=6pt,labelsep=1pt}
\SpecialCoor
\rput(3;90){\rnode{0}{\small0}}
\rput(3;135){\rnode{1}{\small5}}
\rput(3;180){\rnode{2}{\small1}}
\rput(3;225){\rnode{3}{\small7}}
\rput(3;270){\rnode{4}{\small2}}
\rput(3;315){\rnode{5}{\small9}}
\rput(3;360){\rnode{6}{\small3}}
\rput(3;405){\rnode{7}{\small11}}
\rput(2.1;125){\rnode{8}{\small4}}
\rput(2.1;215){\rnode{9}{\small6}}
\rput(2.1;305){\rnode{10}{\small8}}
\rput(2.1;395){\rnode{11}{\small10}}
\ncline{0}{1}\ncline{0}{8}\ncline{8}{1}\ncline[arrows=-|]{1}{2}
\ncline{2}{3}\ncline{2}{9}\ncline{9}{3}\ncline[arrows=-|]{3}{4}
\ncline{4}{5}\ncline{4}{10}\ncline{10}{5}\ncline[arrows=-|]{5}{6}
\ncline{6}{7}\ncline{6}{11}\ncline{11}{7}\ncline[arrows=-|]{7}{0}
\psset{arrows=-|,nodesepB=5pt,arcangleA=-20,arcangleB=-30}
\ncarc{8}{4}\ncarc{9}{6}\ncarc{10}{0}\ncarc{11}{2}
\psset{arcangle=-15,nodesepA=5pt}
\ncarc{0}{4}\ncarc{2}{6}\ncarc{4}{0}\ncarc{6}{2}
\rput(-6.5,0){
\rput(2;90){\rnode{0a}{\small0}}
\rput(2;180){\rnode{1a}{\small1}}
\rput(2;270){\rnode{2a}{\small2}}
\rput(2;360){\rnode{3a}{\small3}}}
\psset{arrows=-|,nodesep=3pt,arcangle=-45}
\ncarc{0a}{1a}\ncarc{1a}{2a}\ncarc{2a}{3a}\ncarc{3a}{0a}
\psset{arcangle=-18,nodesepA=5pt}
\ncarc{0a}{2a}\ncarc{2a}{0a}\ncarc{1a}{3a}\ncarc{3a}{1a}
\end{pspicture}
\caption{The and-nets $f,g$ of Theorem A. On the right, $g$ is a fixed-point-free and-net without local negative cycle.}
\label{fig:ce}
\end{figure}
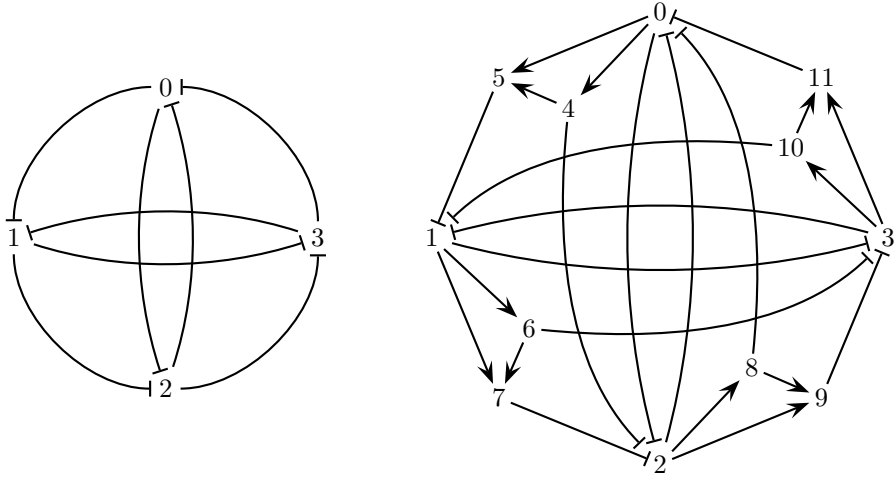
Let $f$ be the negative and-net defined on the left side of Figure \ref{fig:ce}. The transpose of the underlying directed graph is an example of graph without kernel, whose odd cycles all have a chord. The set $S$ of negative cycles of $\G(f)$ consists of the $4$ cycles $C_i=(i,i+1,i+2,i)$ for $i\in\{0,1,2,3\}$ (where numbers are taken modulo $4$). The unique $S$-quasi-delocalizing function $\chi$ is given by
\begin{align*}
\chi_1(C_i) &= (i,i+2) \\
\chi_2(C_i) &= (i,i+1).
\end{align*}
By Lemma \ref{trickpol}, $f$ can then be expanded to an and-net $g$ without local negative cycle, which is pictured on the right side of Figure \ref{fig:ce}.

On the other hand, $f$ has no fixed point: indeed, $(0,0,0,0)$ is clearly not a fixed point of $f$, and if $x\in\B^4$ is fixed point such that $x_i=1$, then $x_{i+1}=x_{i+2}=0$, whence $x_{i+3}=1$ and $f_i(x)=0\neq x_i$, contradiction. Since $g$ is an expansion of $f$, by Proposition \ref{prop:red}, it has no fixed point either. This concludes the proof of Theorem A.

Notice that some negative cycles of $\G(g)$ have only external delocalizing triples, for instance $(0,5,1,7,2,0)$ has an external delocalizing triple $(4,5,2)$, and no internal one. The network $g$ is therefore not in contradiction with point 4 of Theorem \ref{th:knownneg}.

On the other hand, the $4$-dimensional negative and-net $f$ of Figure \ref{fig:ce} has an attractive cycle
$$
\begin{array}{@{\extracolsep{6mm}}ccccccccc}
&&&&&&&& \\
\rnode{1}{e^{3}} &
\rnode{2}{e^{2,3}} &
\rnode{3}{e^{2}} &
\rnode{4}{e^{1,2}} &
\rnode{5}{e^{1}} &
\rnode{6}{e^{0,1}} &
\rnode{7}{e^{0}} &
\rnode{8}{e^{3,0}} &
\rnode{9}{e^{3}}
\psset{nodesep=3pt,arrowsize=5pt,linewidth=0.5pt,labelsep=3pt}
\ncline[arrows=->]{1}{2}\naput{\scriptstyle 2}
\ncline[arrows=->]{2}{3}\naput{\scriptstyle 3}
\ncline[arrows=->]{3}{4}\naput{\scriptstyle 1}
\ncline[arrows=->]{4}{5}\naput{\scriptstyle 2}
\ncline[arrows=->]{5}{6}\naput{\scriptstyle 0}
\ncline[arrows=->]{6}{7}\naput{\scriptstyle 1}
\ncline[arrows=->]{7}{8}\naput{\scriptstyle 3}
\ncline[arrows=->]{8}{9}\naput{\scriptstyle 0}
\end{array}
$$
where the number above an arrow indicates the updated coordinate. This attractive cycle $\theta$ gives rise to a cyclic attractor in the expanded $12$-dimensional and-net $g$ of Figure \ref{fig:ce}, which is obtained by replacing each trajectory of $\theta$ of the form $(e^{i},e^{i-1,i},e^{i-1})$ by two trajectories from $e^{i,4+2i,5+2i}$ to $e^{i-1,2+2i,3+2i}$. For instance, $(e^3,e^{2,3},e^2)$ is replaced by the two trajectories
$$
\begin{array}{@{\extracolsep{5mm}}ccccc}
&&&& \\
\rnode{2}{e^{2,3,10,11}} &
\rnode{1}{e^{3,10,11}} &&& \\
&&&& \\
&&& \rnode{6}{e^{2,8,9,11}} & \\
\rnode{3}{e^{2,8,3,10,11}} &
\rnode{4}{e^{2,8,9,3,10,11}} &
\rnode{5}{e^{2,8,9,10,11}} &&
\rnode{8}{e^{2,8,9}} \\
&&& \rnode{7}{e^{2,8,9,10}} & \\
\psset{nodesep=2pt,arrowsize=5pt,linewidth=0.5pt,labelsep=3pt}
\ncline[arrows=->]{1}{2}\nbput{\scriptstyle 2}
\ncline[arrows=->]{2}{3}\naput{\scriptstyle 8}
\ncline[arrows=->]{3}{4}\naput{\scriptstyle 9}
\ncline[arrows=->]{4}{5}\naput{\scriptstyle 3}
\ncline[arrows=->]{5}{6}\naput{\scriptstyle 10}
\ncline[arrows=->]{5}{7}\nbput{\scriptstyle 11}
\ncline[arrows=->]{6}{8}\naput{\scriptstyle 11}
\ncline[arrows=->]{7}{8}\nbput{\scriptstyle 10}
\end{array}
$$
and similarly for the trajectories $(e^{2},e^{1,2},e^{1}),(e^{1},e^{0,1},e^{0})$ and $(e^{0},e^{3,0},e^{3})$ of $\theta$. This cyclic attractor is therefore not an attractive cycle, but almost in a certain sense: adding new variables $4+2i$ and $5+2i$ has delocalized all negative cycles, but decreasing $i$ now forces the two updates of $4+2i$ and $5+2i$ at the same time, whence a critical pair which is immediately convergent.

We do not know whether and-nets with an attractive cycle must have a local negative cycle, but we shall see in Section \ref{sec:attcyc} that, in general, arbitrary Boolean networks may have an attractive cycle and no local negative cycle.

\subsection{Reformulation in terms of kernels}
\label{sec:ker}
Let us first insert a consequence of Theorem A in graph theory.

Let $G$ be a directed graph. Given vertices $u,v$ of $G$ (not necessarily distinct), recall from \cite{RR12} that a vertex $w\neq u,v$ is said to be a \emph{subdivision of $(u,v)$} when $(u,w)$ and $(w,v)$ are arcs of $G$, $(u,v)$ is not an arc of $G$, and the in-degree and out-degree of $w$ both equal $1$. A vertex is called a {\emph{subdivision}} when it is a subdivision of some pair of vertices. Given a cycle $C$ of $G$ and vertices $u,v_1,v_2$ of $G$, $(u,v_1,v_2)$ is called a \emph{killing triple} of $C$ when:
\begin{itemize}
\item $v_1$ and $v_2$ are distinct vertices of $C$,
\item $(v_1,u)$ has a subdivision in $G$, but no subdivision of $(v_1,u)$ belongs to $C$.
\item $(v_2,u)$ is an arc of $G$ that is not in $C$,
\end{itemize}
A killing triple $(u,v_1,v_2)$ of $C$ is \emph{internal} when $u$ is a vertex of $C$.

Killing triples mimic delocalizing triples by replacing the positive edge by a pair of consecutive edges through a new point, the subdivision.

In \cite{RR12}, we proved that directed graphs in which every odd cycle has an internal killing triple must have a kernel. This may be contrasted with the following result, which is as an immediate consequence of Theorem A.
\begin{thmaa}
There exist kernel-free directed graphs in which every odd cycle has a killing triple.
\end{thmaa}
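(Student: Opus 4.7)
The plan is to derive Theorem A' by translating Theorem A through the correspondence (recalled earlier in this section) between fixed points of a negative and-net and kernels of the transpose of its underlying directed graph, extended to cope with positive edges by means of subdivisions, exactly as the comment preceding the statement suggests. Let $g$ be the and-net of Theorem A, which has no fixed point and no local negative cycle.

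I will construct the desired graph $G_g$ as follows. Keep every vertex of $\G(g)$; for each negative edge $(a,b)$ of $\G(g)$, add the reversed arc $(b,a)$ to $G_g$; for each positive edge $(a,b)$ of $\G(g)$, introduce a new vertex $w_{ab}$ together with the two arcs $(b,w_{ab})$ and $(w_{ab},a)$. By construction each $w_{ab}$ has in- and out-degree $1$ in $G_g$, so it is a subdivision of the virtual pair $(b,a)$. To check that $G_g$ has no kernel, consider the negative and-net $h$ on $G_g^T$. By the \cite{RR12} correspondence, kernels of $G_g$ are in bijection with fixed points of $h$. Now reduce $h$ by substituting out each variable $x_{w_{ab}}$: since the only in-neighbor of $w_{ab}$ in $G_g^T$ is $a$, one has $h_{w_{ab}}(x)=x_a+1$, and substituting this into $h_b$ turns the factor $(x_{w_{ab}}+1)$ into $((x_a+1)+1)=x_a$. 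A direct computation then shows that $h$ reduces to exactly $g$. By Proposition~\ref{prop:red}, fixed points are preserved by reduction, so the absence of a fixed point of $g$ forces $h$ to have none, and $G_g$ has no kernel.

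It remains to show that every odd cycle of $G_g$ has a killing triple. I will first observe that each cycle $C$ of $G_g$ corresponds to a cycle $C'$ of $\G(g)$ obtained by contracting each sub-path $b\to w_{ab}\to a$ to a single positive arc $(a,b)$ and reversing the direct arcs; a sign computation gives $\operatorname{sign}(C')=(-1)^{|C|}$, so odd cycles of $G_g$ biject with negative cycles of $\G(g)$. Given an odd cycle $C$ of $G_g$, the corresponding $C'$ is negative, hence (since $g$ has no local negative cycle, by Theorem A combined with Proposition~\ref{prop:andlocal}) admits a delocalizing triple $(i,j,k)$, which up to swapping $j,k$ we may assume has $(i,k)$ positive and $(i,j)$ negative in $\G(g)$. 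I then claim that $(u,v_1,v_2)=(i,k,j)$ is a killing triple of $C$: the vertices $j,k$ lie in $V(C')\subseteq V(C)$ and are distinct; the subdivision $w_{ik}$ witnesses that $(k,i)$ has a subdivision in $G_g$, and $w_{ik}\notin C$ because $(i,k)\notin C'$; and $(j,i)\in G_g$ but $(j,i)\notin C$ because $(i,j)\notin C'$. The main technical point to secure is that $w_{ik}$ is indeed the \emph{only} subdivision of $(k,i)$ in $G_g$, so that the condition ``no subdivision of $(k,i)$ belongs to $C$'' is not spoiled by some other witness. A subdivision requires a vertex of in-degree $1$ and out-degree $1$ in $G_g$; the introduced $w_{a'b'}$'s are easily ruled out except for $w_{ik}$ itself, and an inspection of the degrees in $\G(g)$ (every original vertex of $g$ has either in-degree or out-degree at least $2$) shows that no original vertex of $\G(g)$ can serve as a subdivision in $G_g$. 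This last degree verification is the one place where one really uses the specific $12$-vertex and-net constructed in Section~\ref{sec:dfcount} rather than an abstract $g$, and it is the only genuine obstacle in the proof.
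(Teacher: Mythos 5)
Your proposal is correct and follows exactly the route the paper intends (the paper states Theorem A' as an ``immediate consequence'' of Theorem A, relying on the kernel/fixed-point correspondence for negative and-nets and the remark that killing triples mimic delocalizing triples via subdivisions, without writing out the details). Your execution is sound, including the genuinely necessary verification that no original vertex of $\G(g)$ can serve as a subdivision in $G_g$ (every vertex of the $12$-dimensional $g$ has in-degree or out-degree at least $2$), which is the one point the paper's ``immediate'' glosses over.
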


\section{Second construction: Boolean networks with attractive cycles}
\label{sec:attcyc}

\begin{thmb}
There exist Boolean networks with no local negative cycle and an attractive cycle.
\end{thmb}
To prove Theorem B, we shall start with a Boolean network with an antipodal attractive cycle, and then modify the neighborhood of this attractive cycle so as to delocalize all negative cycles. We explain this delocalizing process in Section \ref{sec:pad} and construct the actual counter-example in Section \ref{sec:def}.

\subsection{Padding around an attractive cycle}
\label{sec:pad}
We begin with a remark in \cite{Rue14}: the fact that a Boolean network $f:\B^n\rightarrow\B^n$ has an attractive cycle $\theta$, even an antipodal one, does not imply that for some $x$ on the cycle $\theta$, $\G(f)(x)$ has a negative cycle. A counterexample $f$ to this statement is defined in \cite{Rue14} by starting with an antipodal attractive cycle
$$
\theta=(0,\ldots,e^{1,\ldots,n-1},\overline{0},\ldots,\overline{e^{1,\ldots,n-1}},0)
$$
and adding \emph{moves} (directed edges of $\Gamma(f)$) to delocalize negative cycles at points of $\theta$. The network solely consisting of $\theta$ and no other moves has indeed many small negative cycles: a negative cycle $(i,i+1,i)$ in $\G(f)(e^{1,\ldots,i-1})$ and $\G(f)(\overline{e^{1,\ldots,i-1}})$ for each $i$. The way they are delocalized in \cite{Rue14} creates new local negative cycles (outside of $\theta$), so we may wonder if there exist alternatives. 

The following lemma shows that the first steps of this delocalization process amount to essentially two choices.
\begin{lemma}
\label{lem:negcyc}
Let $k\geqslant 2$, $n\geqslant k+2$ and $f:\B^n\rightarrow\B^n$ be a Boolean network such that for each $i\in\{1,\ldots,k\}$, $f(e^{1,\ldots,i-1})=e^{1,\ldots,i}$, where $e^{1,\ldots,i-1}=0$ when $i=1$. If $\G(f)$ has no local negative cycle, then for each $i\in\{1,\ldots,k\}$, $\Gamma(f)$ contains as a subgraph one of the two graphs $H_i,K_i$ of Figure \ref{fig:sol}.
\end{lemma}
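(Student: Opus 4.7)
The plan is, for each $i\in\{1,\ldots,k\}$, to decode the content of the hypothesis $f(e^{1,\ldots,i-1})=e^{1,\ldots,i}$ in terms of the local interaction graphs at $x^{i-1}:=e^{1,\ldots,i-1}$ and $x^i:=e^{1,\ldots,i}$, and then to read off from ``no local negative cycle'' the moves that must appear in $\Gamma(f)$.

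The first step classifies the signs in $\G(f)(x^{i-1})$. At $x=x^{i-1}$ we have $x_s=1$ iff $s\leqslant i-1$, and the hypothesis gives $f_j(x)=1$ iff $j\leqslant i$. Plugging this into the rule ``edge $(s,j)$ is positive iff $x_s=f_j(x)$'' and multiplying along an arbitrary elementary cycle, a short computation shows that a cycle of $\G(f)(x^{i-1})$ is negative iff it passes through the vertex $i$. Hence ``no local negative cycle at $x^{i-1}$'' is equivalent to ``no elementary cycle of $\G(f)(x^{i-1})$ through $i$'', and the analogous statement at $x^i$ (available when the hypothesis at level $i+1$ applies) distinguishes vertex $i+1$ instead.

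The second step locates the forced incidences at vertex $i$ of $\G(f)(x^{i-1})$. The hypothesis at $x^{i-2}$ (available for $i\geqslant 2$) gives $f_i(x^{i-2})=0\neq 1=f_i(x^{i-1})$, whence a positive incoming edge $(i-1,i)$; the hypothesis at $x^i$ (available for $i\leqslant k-1$) yields symmetrically a positive outgoing edge $(i,i+1)$. To kill the $2$-cycles $(i-1,i,i-1)$ and $(i,i+1,i)$, which are negative by the first step, the reverse arrows must be absent, forcing $f_{i-1}(x^i)=1$ and $f_i(x^{i-1}+e^{i+1})=1$; each equality translates into a specific directed edge of $\Gamma(f)$ lying off the trajectory. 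A symmetric analysis at $x^i$ produces further forced moves.

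The closing step is to branch on the binary choice that remains once the direct $2$-cycles through $i$ and $i+1$ have been killed: one branch completes the list of forced moves into $H_i$, the other into $K_i$. The hypothesis $n\geqslant k+2$ is used here to guarantee the existence of an extra index $s\notin\{1,\ldots,k\}$ that hosts the additional coordinate appearing in both patterns, so that the two alternatives are non-degenerate. The main difficulty will lie precisely in this branching step: each move added to prevent a negative $2$-cycle may open a longer local cycle through $i$ or $i+1$ at a neighbouring state, so the case analysis must iterate while tracking the values of $f$ at several nearby points simultaneously. The clean criterion from the first step, that the sign of a local cycle at $x^{i-1}$ depends only on whether it visits the single distinguished vertex $i$, should keep this bookkeeping manageable.
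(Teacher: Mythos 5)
Your opening analysis is sound: at $x^{i-1}=e^{1,\ldots,i-1}$ the only degree of freedom is $i$, and your computation that a cycle of $\G(f)(x^{i-1})$ is negative exactly when it passes through the vertex $i$ is correct (it is Lemma \ref{lem:sgnpar} specialized to a point of out-degree $1$). The first forced moves are also right: killing the negative $2$-cycle $(i,i+1,i)$ at $x^{i-1}$ forces $f_i(w)=1$, i.e.\ the edge $(w,e^{1,\ldots,i+1})$ of $\Gamma(f)$, and symmetrically the $2$-cycle $(i+1,i+2,i+1)$ at $x^{i}$ forces $(x,e^{1,\ldots,i+2})$. Two smaller slips, though: killing $(i-1,i,i-1)$ yields nothing new, since $f_{i-1}(x^i)=1$ is already contained in the hypothesis $f(x^i)=x^{i+1}$ and corresponds to the \emph{absence} of a move rather than to ``a specific directed edge of $\Gamma(f)$''; and $n\geqslant k+2$ serves to guarantee that the coordinate $i+2$ itself exists for every $i\leqslant k$ (so the $3$-dimensional subcube of Figure \ref{fig:sol} makes sense), not to supply an index outside $\{1,\ldots,k\}$.

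The genuine gap is the ``closing step'', which you explicitly defer but which is the entire content of the $H_i$/$K_i$ dichotomy. It is not a single binary choice. One must (a) observe that if both $(v,e^{1,\ldots,i+2})$ and $(w,v)$ are absent, then $(i,i+2,i)$ is a negative cycle of $\G(f)(w)$ --- and $w$ lies off the trajectory with a different degree-of-freedom set, so your clean sign criterion at $x^{i-1}$ does not apply there and the sign must be recomputed; (b) in the branch where $(w,v)$ is present, show that $(u,x)$ is forced, because the new value $f_{i+2}(w)=1$ creates the edge $(i+1,i+2)$ in $\G(f)(x^{i-1})$ and hence, if $(u,x)$ were absent, the negative $3$-cycle $(i,i+1,i+2,i)$ at $x^{i-1}$ (this is precisely the phenomenon you anticipate, of a move added to kill one cycle opening a longer one); and (c) close with a second alternative, $(v,e^{1,\ldots,i+2})$ or $(u,v)$, coming from the $2$-cycle $(i,i+1,i)$ at $u$. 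Only after (a)--(c) do the forced moves organize themselves into exactly the two patterns $H_i$ and $K_i$. As written, the proposal correctly reproduces the first step of the paper's argument but replaces its essential nested case analysis by an announcement that it will be difficult, so it is a plan rather than a proof.
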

\begin{figure}[!t]
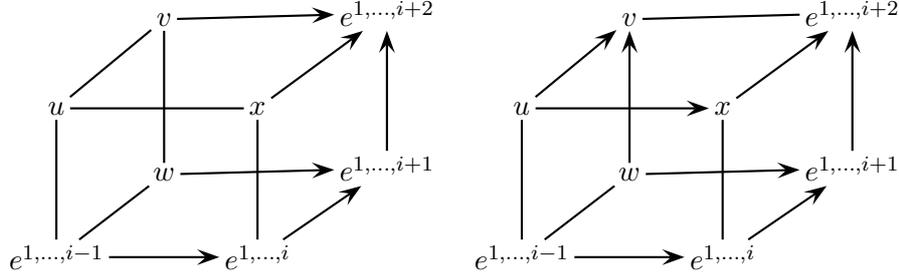

$$
\begin{array}{cc}
\begin{array}{@{\extracolsep{3mm}}cccc}
& \rnode{011}{v} && \rnode{111}{e^{1,\ldots,i+2}} \\[7mm]
\rnode{001}{u} && \rnode{101}{x} & \\[4mm]
& \rnode{010}{w} && \rnode{110}{e^{1,\ldots,i+1}} \\[7mm]
\rnode{000}{e^{1,\ldots,i-1}} && \rnode{100}{e^{1,\ldots,i}} &
\psset{nodesep=2pt,arrowsize=6pt}
\ncline[arrows=->]{000}{100}
\ncline[arrows=->]{100}{110}
\ncline[arrows=->]{110}{111}
\ncline[arrows=->]{010}{110}
\ncline[arrows=->]{101}{111}
\ncline[arrows=-]{000}{010}
\ncline[arrows=-]{000}{001}
\ncline[arrows=<-]{111}{011}
\ncline[arrows=-]{010}{011}
\ncline[arrows=-]{011}{001}
\ncline[arrows=-]{001}{101}
\ncline[arrows=-]{101}{100}
\end{array}
&
\begin{array}{@{\extracolsep{3mm}}cccc}
& \rnode{011}{v} && \rnode{111}{e^{1,\ldots,i+2}} \\[7mm]
\rnode{001}{u} && \rnode{101}{x} & \\[4mm]
& \rnode{010}{w} && \rnode{110}{e^{1,\ldots,i+1}} \\[7mm]
\rnode{000}{e^{1,\ldots,i-1}} && \rnode{100}{e^{1,\ldots,i}} &
\psset{nodesep=2pt,arrowsize=6pt}
\ncline[arrows=->]{000}{100}
\ncline[arrows=->]{100}{110}
\ncline[arrows=->]{110}{111}
\ncline[arrows=->]{010}{110}
\ncline[arrows=->]{101}{111}
\ncline[arrows=-]{000}{010}
\ncline[arrows=-]{000}{001}
\ncline[arrows=-]{111}{011}
\ncline[arrows=->]{010}{011}
\ncline[arrows=<-]{011}{001}
\ncline[arrows=->]{001}{101}
\ncline[arrows=-]{101}{100}
\end{array}
\end{array}
$$
\caption{The directed edges are the edges of the two subgraphs $H_i$ (left) and $K_i$ (right) of Lemma \ref{lem:negcyc}. In these pictures, undirected edges keep track of the hypercube structure of $\B^n$ but do not belong to $H_i$ or $K_i$.}
\label{fig:sol}
\end{figure}
This only means that the directed edges of $H_i$ or $K_i$ should be edges of $\Gamma(f)$ (and not that $H_i$ or $K_i$ should be an induced subgraph of $\Gamma(f)$).
\begin{proof}
$\Gamma(f)$ must contain the two edges
$$
(w, e^{1,\ldots,i+1}) \text{ and } (x, e^{1,\ldots,i+2})
$$
(see Figure \ref{fig:sol}) because otherwise $(i,i+1,i)$ is a negative cycle of $\G(f)(e^{1,\ldots,i-1})$ or $(i+1,i+2,i+1)$ is a negative cycle of $\G(f)(e^{1,\ldots,i})$.

Furthermore, $\Gamma(f)$ contains
$$
\text{either } (v, e^{1,\ldots,i+2}) \text{ or } (w,v),
$$
since otherwise, $(i,i+2,i)$ is a negative cycle of $\G(f)(w)$. In the first case, $\Gamma(f)$ contains $H_i$ as a subgraph. In the second case, $\Gamma(f)$ must contain $(u,x)$, because otherwise, $(i,i+1,i+2,i)$ is a negative cycle of $\G(f)(e^{1,\ldots,i-1})$. And finally, $\Gamma(f)$ must contain
$$
\text{either } (v, e^{1,\ldots,i+2}) \text{ or } (u,v),
$$
because otherwise, $(i,i+1,i)$ is a negative cycle of $\G(f)(u)$. Thus $\Gamma(f)$ contains a supergraph of $H_i$ again in the first case, and $K_i$ in the second case.
\end{proof}
Containing $H_i$ or $K_i$ as a subgraph for all $i$ is certainly not a sufficient condition for $f$ to have no local negative cycle. For instance, the network with an antipodal attractive cycle which is defined in \cite{Rue14} is obtained by adding, for all $x$ on the cycle such that $f(x)=x+e^i$ and all $j\neq i$, the edges $(x+e^j,x)$. In particular, $H_i$ is the systematic choice, and the resulting network still has local negative cycles.

In general, starting with an antipodal attractive cycle $\theta$, the consecutive choices of $H_i$ or $K_i$ delocalize some negative cycles by creating new ones. The question is therefore whether this non-deterministic process of padding the asynchronous dynamics with new moves can terminate by delocalizing all negative cycles.

Actually, it suffices to know how to delocalize negative cycles starting from a linear deterministic trajectory: the fact that $\theta$ above is a cycle is irrelevant. This is simply because interaction graphs are defined locally, and for $n$ large enough, an antipodal attractive cycle looks locally like a linear trajectory. Indeed, let $N(X,r)$ denote, for any $X\subseteq\B^n$, the \emph{$r$-neighborhood} of $X$ (the set of points $x$ such that $d(x,X)\leqslant r$), and assume that, for some constant $r$, we have a procedure for delocalizing all negative cycles of an arbitrarily long linear trajectory $L=(0,\ldots,e^1,e^{1,2},e^{1,\ldots,n-1})$ by adding moves starting only at points of $N(L,r)$, so that points outside $N(L,r)$ are fixed. Assume moreover that this procedure is homogeneous enough: the choices of $H_i$ or $K_i$ are periodic (of period independent of $n$), so that the procedure does not depend too much on the position in $L$. By using this procedure, we may now delocalize all negative cycles of a Boolean network $f$ with an antipodal attractive cycle $\theta$, for $n$ large enough. First notice that, by the following lemma \cite{Rue14}, no negative cycle may be localized at points outside $N(\theta,r)$.
\begin{lemma}
\label{lem:sgnpar}
If $C$ is a cycle of $\G(f)(x)$ with vertex set $I$, then $C$ is positive (resp. negative) when $x$ has an even (resp. odd) out-degree in $\Gamma(f)$. In particular, if $x$ is a fixed point, $\G(f)(x)$ has no negative cycle.
\end{lemma}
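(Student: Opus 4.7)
First, I would unpack the sign of a single edge using the characterization from Section \ref{sec:int}: an edge $(j,i)$ of $\G(f)(x)$ is positive precisely when $x_j = f_i(x)$, so its sign can be written compactly as $(-1)^{x_j + f_i(x)}$, where the sum $x_j + f_i(x)$ is computed in \B and then read as the integer $0$ or $1$ for use as an exponent. This compact formula is the only nontrivial ingredient of the proof.

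Next, writing the cycle as $C = (i_1, i_2, \ldots, i_k, i_1)$ with vertex set $I = \{i_1, \ldots, i_k\}$, I would multiply the signs of the $k$ edges $(i_\ell, i_{\ell+1})$ to obtain
$$
\operatorname{sign}(C) \;=\; (-1)^{\sum_{\ell=1}^{k} (x_{i_\ell} + f_{i_{\ell+1}}(x))}.
$$
Since the maps $\ell \mapsto i_\ell$ and $\ell \mapsto i_{\ell+1}$ each enumerate $I$ bijectively, the exponent reorganizes as $\sum_{i \in I} (x_i + f_i(x))$. Now $x_i + f_i(x)$ equals $1$ in \B exactly when $f_i(x) \neq x_i$, i.e.\ when $i$ is a degree of freedom of $x$, so the exponent counts, modulo two, the degrees of freedom of $x$ that lie in $I$. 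This yields the announced parity relation between the sign of $C$ and the out-degree of $x$ in $\Gamma(f)$.

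The ``in particular'' clause is then immediate: a fixed point $x$ has no degree of freedom at all, so the exponent vanishes for every cycle, and $\G(f)(x)$ contains no negative cycle. No real obstacle arises here; the only care needed is to keep the addition in \B appearing in the edge signs conceptually separate from the integer parity in the exponent, and to note that the telescoping of the product relies on each vertex of $I$ appearing exactly once as a source and exactly once as a target along $C$.
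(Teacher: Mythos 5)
Your telescoping argument is correct and is the natural proof of this fact; note that the paper itself states Lemma \ref{lem:sgnpar} without proof, citing \cite{Rue14}, so there is no in-paper argument to compare against. Writing each edge sign as $(-1)^{x_j+f_i(x)}$ and using the fact that along the cycle each vertex of $I$ occurs exactly once as a source and once as a target is exactly the right route, and the ``in particular'' clause follows immediately.

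One point of care about your conclusion. What your computation actually establishes is that the sign of $C$ equals $(-1)^{|I\cap D|}$, where $D=\{i:f_i(x)\neq x_i\}$ is the set of degrees of freedom of $x$ --- i.e.\ the parity of the number of degrees of freedom \emph{lying on the cycle} --- and not the parity of the full out-degree $|D|$ of $x$ in $\Gamma(f)$; the two agree only when every degree of freedom of $x$ is a vertex of $C$. The version you derived is in fact the one the paper actually uses (``$C$ is positive if and only if $I\cap J$ has even cardinality'' in the proof of Lemma \ref{lem:equiv}, and ``cycles of $\G(f)(x)$ through an odd number of degrees of freedom of $x$'' in Section \ref{sec:noneg}), so you have proved the intended statement; the lemma's literal wording is loose on this point. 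But your closing sentence, which identifies the exponent with the out-degree of $x$ in $\Gamma(f)$, claims slightly more than the computation shows and should be rephrased in terms of $|I\cap D|$.
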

Besides, the interaction graph $\G(f)(y)$ depends on moves starting only from points in $N(\{y\},1)$, and if $y\in N(\theta,r)$, we have for $n$ sufficiently larger than $r$:
$$
N(\{y\},1)\cap N(\theta,r)=N(\{y\},1)\cap N(L,r)
$$
for some linear portion $L$ of $\theta$ which contains $y$. Intuitively, the context $N(\{y\},1)$ useful for $\G(f)(y)$ only sees a linear part of the cycle. See Figure \ref{fig:padding}.

We shall now make this intuition precise by showing that systematically choosing $K_i$-type graphs and padding $N(\theta,r)$ up to $r=2$ delocalizes all negative cycles if $n\geqslant 7$.
\begin{figure}[!t]
\SpecialCoor
\centering
\psset{unit=10mm}
\begin{pspicture}(-3.3,-3.3)(4.3,3.3)
\psset{nodesep=2pt,arrowsize=7pt}
\put(0,0){\rnode{C}{}}
\pscircle[linewidth=1pt](C){2.5}
\pscircle[linestyle=dashed,linewidth=0.5pt](C){1.7}
\pscircle[linestyle=dashed,linewidth=0.5pt](C){3.3}
\rput(0,2.5){$\pmb{<}$}
\rput*(2.5;180){$\theta$}
\rput*(2.5;-45){\small$a^i$}\pscircle[linewidth=0.5pt](2.5;-45){0.4}\pscircle[linewidth=0.5pt](2.5;-45){0.8}
\rput*(2.1;45){\small$b^j$}\pscircle[linewidth=0.5pt](2.1;45){0.4}
\rput*(2.9;135){\small$c^k$}\pscircle[linewidth=0.5pt](2.9;135){0.4}
\rput*(1.7;-135){\small$d^\ell$}\pscircle[linewidth=0.5pt](1.7;-135){0.4}
\pscurve{->}(4.5;0)(4;-10)(2.7;-28)
\rput(5;4){\small$A^i\subseteq N(\{a^i\},2)$}
\end{pspicture}
\caption{Padding around an attractive cycle $\theta$ in Sections \ref{sec:pad} and \ref{sec:noneg}. The $2$ large dashed circles enclose $N(\theta,2)$, and the $4$ small circles represent $N(\{a^i\},1),N(\{b^j\},1),N(\{c^k\},1)$ and $N(\{d^\ell\},1)$.}
\label{fig:padding}
\end{figure}
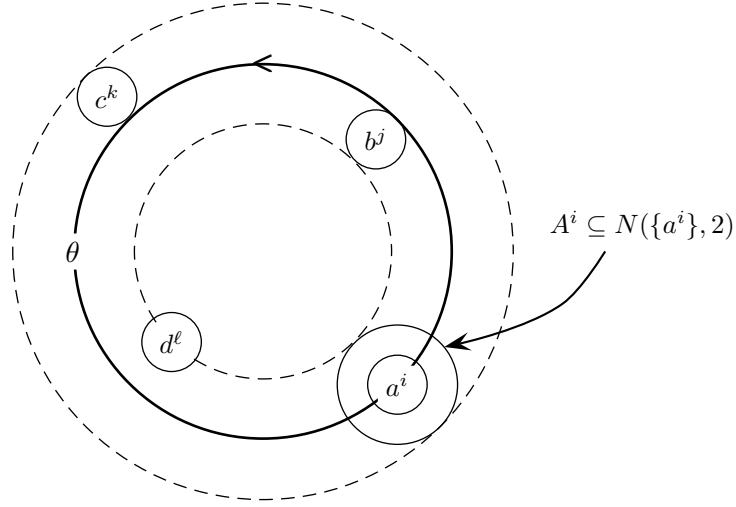

\subsection{Definition of a counter-example}
\label{sec:def}
We now assume $n\geqslant 7$.

Consider the Boolean network $f:\B^n\rightarrow\B^n$ defined as follows. For $1\leqslant i\leqslant n$, let
$$
a^i=e^{1,\ldots,i-1} \text{ and } a^{n+i}=\overline{a^i}.
$$
Hence $a^1=0$, and the antipodal attractive cycle $\theta$ is $(a^1,\ldots,a^{2n},a^1)$. Let $e^{i+kn}=e^i$ for $i\geqslant 1$ and any $k\in\Z$, and for $1\leqslant i\leqslant 2n$, let
\begin{align*}
b^i &= a^i+e^{i+1} \\
c^i &= a^i+e^{i+2} \\
d^i &= a^i+e^{i+2,i+3},
\end{align*}
so that $b^{n+i}=\overline{b^i},c^{n+i}=\overline{c^i}$ and $d^{n+i}=\overline{d^i}$. To simplify later notations, these four sequences of points of length $2n$ are extended to $\Z$-indexed sequences by letting $a^{i+2kn}=a^i$ for $1\leqslant i\leqslant 2n$ and any $k$, and similarly for $(b^i)_{i\in\Z},(c^i)_{i\in\Z},(d^i)_{i\in\Z}$. For any $i\in\Z$, let also:
$$
A^i = \{a^i,b^i,c^i,d^i\}, \quad A=\bigcup_1^{2n} A^i.
$$
See Figure \ref{fig:padding}.
\begin{itemize}
\item For $1\leqslant i\leqslant 2n$, define $f(a^i)=a^{i+1}$: in particular, $f(a^{2n})=a^1$ and $\theta$ is an antipodal attractive cycle of $f$.
\item For $1\leqslant i\leqslant 2n$, define
\begin{align*}
f(b^i) &= a^{i+3} \\
f(c^i) &= a^{i+3} \\
f(d^i) &= a^{i+4}+e^{i+1}.
\end{align*}
In other terms:
\begin{align*}
f(b^i)+b^i &= e^{i,i+2} \\
f(c^i)+c^i &= e^{i,i+1} \\
f(d^i)+d^i &= e^i,
\end{align*}
therefore, for $1\leqslant i\leqslant n$, $b^i$ and its antipode $b^{n+i}$ have $2$ degrees of freedom in $\Gamma(f)$, $i$ and $i+2$, $c^i$ and its antipode have $2$ degrees of freedom, $i$ and $i+1$, and $d^i$ and its antipode have $1$ degree of freedom, $i$.
\item Any other point $x\in\B^n$ is fixed: $f(x)=x$.
\end{itemize}
\begin{figure}[!t]
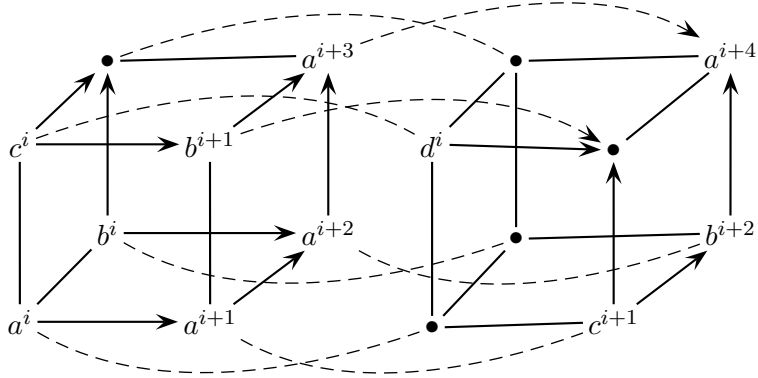

$$
\hspace{8mm}
\begin{array}{@{\extracolsep{5mm}}ccccccccc}
& \rnode{011}{\bullet} && \rnode{111}{a^{i+3}} &
& \rnode{q011}{\bullet} && \rnode{q111}{a^{i+4}} & \\[7mm]
\rnode{001}{c^i} && \rnode{101}{b^{i+1}} &&
\rnode{q001}{d^i} && \rnode{q101}{\bullet} && \\[7mm]
& \rnode{010}{b^i} && \rnode{110}{a^{i+2}} &
& \rnode{q010}{\bullet} && \rnode{q110}{b^{i+2}} & \\[7mm]
\rnode{000}{a^i} && \rnode{100}{a^{i+1}} &&
\rnode{q000}{\bullet} && \rnode{q100}{c^{i+1}} && \\
\psset{nodesep=2pt,arrowsize=6pt}
\ncline[arrows=->]{000}{100}
\ncline[arrows=->]{100}{110}
\ncline[arrows=->]{110}{111}
\ncline[arrows=->]{010}{110}
\ncline[arrows=->]{101}{111}
\ncline[arrows=-]{000}{010}
\ncline[arrows=-]{000}{001}
\ncline[arrows=-]{111}{011}
\ncline[arrows=->]{010}{011}
\ncline[arrows=<-]{011}{001}
\ncline[arrows=->]{001}{101}
\ncline[arrows=-]{101}{100}
\ncline[arrows=-]{q000}{q100}
\ncline[arrows=->]{q100}{q110}
\ncline[arrows=->]{q110}{q111}
\ncline[arrows=-]{q010}{q110}
\ncline[arrows=-]{q101}{q111}
\ncline[arrows=-]{q000}{q010}
\ncline[arrows=-]{q000}{q001}
\ncline[arrows=-]{q111}{q011}
\ncline[arrows=-]{q010}{q011}
\ncline[arrows=-]{q011}{q001}
\ncline[arrows=->]{q001}{q101} 
\ncline[arrows=<-]{q101}{q100}
\psset{arcangleA=-30,arcangleB=-20,linestyle=dashed,linewidth=0.5pt}
\ncarc[arrows=-]{000}{q000}
\ncarc[arrows=-]{100}{q100}
\ncarc[arrows=-]{010}{q010}
\ncarc[arrows=-]{110}{q110}
\psset{arcangleA=20,arcangleB=30}
\ncarc[arrows=-]{001}{q001}
\ncarc[arrows=->]{101}{q101}
\ncarc[arrows=-]{011}{q011}
\ncarc[arrows=->]{111}{q111}
\end{array}
$$
\caption{Partial illustration of the definition of $f$ in Section \ref{sec:def}. Dashed edges in dimension $i+3$ are simply meant to facilitate visualization.}
\label{fig:def}
\end{figure}
This definition is illustrated in Figure \ref{fig:def}. Let us make a few comments. First, the points $b^i,c^i$ with $2$ degrees of freedom correspond to the choice of subgraphs $K_i$ (Figure \ref{fig:sol}). Then the overlapping of these successive subgraphs $K_i$ creates, for each $i$, $1\leqslant i\leqslant n$, a negative cycle $(i,i+3,i)$ in $\G(f)(c^i)$ and $\G(f)(c^{n+i})$: the purpose of the degree of freedom $i$ of the $2n$ points $d^i$ is to delocalize these negative cycles. Also, as it is announced, only points at distance at most $2$ from $\theta$ are not fixed points.

To prove that the above Boolean network $f$ is indeed well-defined (Proposition \ref{prop:wd}), we shall need a few remarks on the isometries of $\B^n$ to reduce the number of necessary verifications. It is clear that these isometries are exactly the graph automorphisms of the hypercube $\B^n$, and well-studied as such. See, e.g., \cite{Har00} and references therein. It is not difficult to figure out either that they are precisely the functions $U$ from $\B^n$ to itself such that, for any $I\subseteq\{1,\ldots,n\}$:
$$
U(e^I)=U_0+e^{\sigma(I)}
$$
for some permutation $\sigma\in\Sig_n$ and some $U_0\in\B^n$. Here, $\sigma(I)=\{\sigma(i),i\in I\}$. We check this in \ref{sec:isomproof} to make the paper self-contained. So, for any $x\in\B^n$, $U(x+e^I)=U(x)+e^{\sigma(I)}$.

Let $S:\B^n\rightarrow\B^n$ be the cyclic permutation of coordinates defined by $S(x)=(x_n,x_1,\ldots,x_{n-1})$ and $T$ be the fixed-point-free isometry of $\B^n$ defined by $T(x)=S(x)+e^1$.

Since $T(x+e^i)=T(x)+e^{i+1}$ for any $x$ and $i$, we have the following:
\begin{lemma}
\label{lem:abcdequiv}
For any $i$, $T(a^i)=a^{i+1}, T(b^i)=b^{i+1}, T(c^i)=c^{i+1}, T(d^i)=d^{i+1}$.
\end{lemma}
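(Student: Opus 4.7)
The plan is to reduce the whole lemma to the single statement $T(a^i) = a^{i+1}$, and then derive the three remaining identities mechanically using the hint $T(x + e^i) = T(x) + e^{i+1}$ that the paper records immediately before the lemma statement.

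Indeed, once $T(a^i) = a^{i+1}$ is known, applying the hint once to $b^i = a^i + e^{i+1}$ and $c^i = a^i + e^{i+2}$, and twice to $d^i = a^i + e^{i+2} + e^{i+3}$, gives
\begin{align*}
T(b^i) &= T(a^i) + e^{i+2} = a^{i+1} + e^{(i+1)+1} = b^{i+1}, \\
T(c^i) &= T(a^i) + e^{i+3} = a^{i+1} + e^{(i+1)+2} = c^{i+1}, \\
T(d^i) &= T(a^i) + e^{i+3} + e^{i+4} = a^{i+1} + e^{(i+1)+2,(i+1)+3} = d^{i+1},
\end{align*}
which are the three remaining identities.

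It therefore suffices to prove $T(a^i) = a^{i+1}$. For $1 \le i \le n$, I would just compute directly from $a^i = e^{1,\ldots,i-1}$ and $T(x) = S(x) + e^1$, using $S(e^j) = e^{j+1}$:
\[
T(a^i) = S(e^{1,\ldots,i-1}) + e^1 = e^{2,\ldots,i} + e^1 = e^{1,\ldots,i} = a^{i+1},
\]
the boundary case $i = n$ producing $T(a^n) = e^{1,\ldots,n} = \overline{0} = a^{n+1}$ as required. For $n + 1 \le i \le 2n$, the cleanest route is to observe that $T$ commutes with the antipode map: applying the hint $n$ times yields $T(x + e^{1,\ldots,n}) = T(x) + e^{1,\ldots,n}$, \ie $T(\overline{x}) = \overline{T(x)}$. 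Hence for $1 \le j \le n$ we get $T(a^{n+j}) = T(\overline{a^j}) = \overline{T(a^j)} = \overline{a^{j+1}} = a^{n+j+1}$, and the $2n$-periodic extension of the sequences then propagates the identity to every $i \in \Z$.

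The only real obstacle is bookkeeping: keeping track of the wrap-around at $i = n$ (where $e^{n+1}$ becomes $e^1$) and of the periodic extension modulo $2n$. Using the commutation of $T$ with the antipode avoids having to split the second half $n+1 \le i \le 2n$ into further cases and reduces the entire argument to the single direct computation performed on the first half.
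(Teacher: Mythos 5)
Your proposal is correct and takes essentially the same route as the paper, which states the lemma as an immediate consequence of the preceding remark $T(x+e^i)=T(x)+e^{i+1}$ and gives no further detail. You have simply written out the verification (base identity $T(a^i)=a^{i+1}$, the antipode commutation for the second half of the cycle, and the mechanical derivation of the $b,c,d$ cases) that the paper leaves implicit, and all the computations check out.
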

\begin{proposition}
\label{prop:wd}
If $n\geqslant 7$ and $1\leqslant i,j,k,\ell\leqslant 2n$ are all different, then so are $a^i,b^j,c^k,d^\ell$. Therefore, the Boolean network $f$ is well-defined.
\end{proposition}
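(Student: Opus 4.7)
My strategy is to prove the stronger fact that the four-element clusters $A^i=\{a^i,b^i,c^i,d^i\}$ are pairwise disjoint for distinct indices $i$ modulo $2n$. Since the four points $a^i,b^j,c^k,d^\ell$ then lie in four \emph{different} clusters, they are pairwise distinct, and the prescription of $f$ on $A$ is consistent.

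Each point of $A^i$ has the form $a^i+u$ with $u\in U:=\{0,e^{i+1},e^{i+2},e^{i+2,i+3}\}$ of weight $\leqslant 2$, and similarly each point of $A^{i+s}$ is $a^{i+s}+v$ with $v\in V_s:=\{0,e^{i+s+1},e^{i+s+2},e^{i+s+2,i+s+3}\}$. A common point forces $u+v=a^i+a^{i+s}$, a vector whose weight equals the cyclic Hamming distance $\min(s,2n-s)$. Since $|u+v|\leqslant 4$, any intersection requires $\min(s,2n-s)\leqslant 4$, and swapping the roles of $i$ and $i+s$ reduces the ``backward'' cases $s\in\{2n-4,\ldots,2n-1\}$ to the ``forward'' ones $s\in\{1,2,3,4\}$.

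For each remaining $s$, I compare supports modulo $n$. The support of $u$ lies in $\{i+1,i+2,i+3\}$ and that of $v$ in $\{i+s+1,i+s+2,i+s+3\}$, so the support of $u+v$ lies in $\{i+1,\ldots,i+s+3\}\pmod n$, whereas $a^i+a^{i+s}=e^i+\cdots+e^{i+s-1}$ requires the index $i$ in its support. For $s\in\{1,2,3\}$, the set $\{i+1,\ldots,i+6\}$ avoids $i$ modulo $n$ precisely because $n\geqslant 7$, and the intersection is empty.

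The main obstacle is $s=4$ at $n=7$, where $i+7\equiv i\pmod 7$ wraps the index set of $v$ back to $i$: specifically, $d^{i+4}=a^{i+4}+e^{i,i+6}$ now has $i$ in its support. The only choice of $v$ supplying an $e^i$ contribution is $v=e^{i,i+6}$, and then $\mathrm{supp}(u+v)=\mathrm{supp}(u)\cup\{i,i+6\}$ (a disjoint union, since $i+6\not\in\{i+1,i+2,i+3\}\pmod 7$). For this to match the required support $\{i,i+1,i+2,i+3\}$ one would need $i+6\in\{i+1,i+2,i+3\}\pmod 7$, which fails. So $s=4$ is impossible, completing the proof.
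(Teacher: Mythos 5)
Your argument is correct, and it takes a genuinely different and considerably shorter route than the paper's. The paper reduces, via the $T$-equivariance of Lemma \ref{lem:abcdequiv}, to comparing $A^0$ with $A^i$, and then (in the appendix) exhaustively computes the symmetric differences $x+y$ for all ten pairs of point types and every index $i$ for which the resulting set could be small enough, reading off equality and, at the same time, adjacency. You instead establish disjointness of the clusters $A^i$ by two soft observations: the weight bound $|u+v|\leqslant 4$ against $d(a^i,a^{i+s})=\min(s,2n-s)$ eliminates all offsets except $s\in\{1,2,3,4\}$ up to swapping, and the support-location argument --- $i$ belongs to $\mathrm{supp}(a^i+a^{i+s})=\{i,\dots,i+s-1\}$ but not to $\mathrm{supp}(u+v)\subseteq\{i+1,\dots,i+s+3\}$ unless $n\leqslant s+3$ --- disposes of these, leaving only the residual case $(s,n)=(4,7)$, which you settle correctly. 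Two small points of presentation: your treatment of $s=4$ with $n\geqslant 8$ is only implicit (the set $\{i+1,\dots,i+7\}$ avoids $i$ modulo $n$ exactly when $n\geqslant 8$, so it falls under the same argument as $s\in\{1,2,3\}$, but you should say so); and the final claim that $f$ is consistently defined also uses the trivial within-cluster distinctness $|A^i|=4$, which you leave unstated. Your cluster-disjointness statement is in fact formally stronger than the proposition as phrased (it also yields $b^i\neq b^j$ for $i\neq j$, etc., which well-definedness needs). What your approach does not deliver is the neighbour information of Lemma \ref{lem:neighbors}, which is the other product of the paper's longer enumeration; so you shorten the proof of Proposition \ref{prop:wd} without replacing the appendix wholesale.
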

To prove Proposition \ref{prop:wd}, it suffices, by Lemma \ref{lem:abcdequiv}, to compute the distances between points of $A^0$ and points of $A^i$ for any $i\neq 0$. This is postponed to \ref{sec:appen}.

Now, to prove Theorem B, it remains to show that $\G(f)$ has no local negative cycle. Since all points outside $A=\bigcup_1^{2n} A^i$ are fixed points, Lemma \ref{lem:sgnpar} shows that no negative cycle may be localized at these points. Therefore it suffices to check that $\G(f)(x)$ has no negative cycle for $x\in A$. By Lemmas \ref{lem:equiv} and \ref{lem:fequiv} below, it suffices to check this for $x\in A^0$.

\subsection{Equivariance}
\label{sec:equivariance}
\begin{lemma}
\label{lem:equiv}
If $U$ is an isometry of $\B^n$ and $f:\B^n\rightarrow\B^n$ is $U$-equivariant ($f\circ U=U\circ f$), then for any $x$, $\G(f)(U(x))$ and $\G(f)(x)$ have isomorphic underlying directed graphs. Moreover, corresponding cycles have the same sign.
\end{lemma}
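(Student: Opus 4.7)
The plan is to exploit the characterization of isometries of $\B^n$ recalled just before the lemma, namely that any isometry $U$ has the form $U(x) = U_0 + Px$, where $U_0 = U(0)$ and $P$ is the linear permutation of coordinates induced by some $\sigma\in\Sig_n$; equivalently $U(x+e^i) = U(x) + e^{\sigma(i)}$ for every $x$ and $i$. The candidate graph isomorphism will be the bijection $\sigma:\{1,\ldots,n\}\to\{1,\ldots,n\}$ sending the edge $(j,i)$ of $\G(f)(x)$ to the edge $(\sigma(j),\sigma(i))$ of $\G(f)(U(x))$.

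First I would establish the isomorphism at the level of underlying directed graphs. Since $U$ is additive up to the constant $U_0$, one has $U(a)+U(b) = P(a+b)$, so combining this with $U$-equivariance gives
\[
f(U(x)) + f(U(x)+e^{\sigma(j)}) = U(f(x)) + U\bigl(f(x+e^{j})\bigr) = P\bigl(f(x)+f(x+e^j)\bigr).
\]
Reading the $\sigma(i)$-th coordinate on both sides and using that $(Py)_{\sigma(i)} = y_i$, this yields
\[
\partial_{\sigma(j)} f_{\sigma(i)}(U(x)) = \partial_j f_i(x),
\]
so $(j,i)$ is an edge of $\G(f)(x)$ iff $(\sigma(j),\sigma(i))$ is an edge of $\G(f)(U(x))$.

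Next I would analyze the signs. By the criterion recalled in Section \ref{sec:int}, the sign bit of an edge $(j,i)$ of $\G(f)(x)$ is $\varepsilon(j,i,x) = x_j + f_i(x)$ (with $0$ meaning positive and $1$ meaning negative). Using $(U(x))_{\sigma(j)} = x_j + (U_0)_{\sigma(j)}$ and, by equivariance, $f_{\sigma(i)}(U(x)) = (U(f(x)))_{\sigma(i)} = f_i(x) + (U_0)_{\sigma(i)}$, one computes
\[
\varepsilon(\sigma(j),\sigma(i),U(x)) = \varepsilon(j,i,x) + (U_0)_{\sigma(i)} + (U_0)_{\sigma(j)}.
\]
So the sign of an individual edge may change; this is the only subtle point. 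However, given any cycle $C = (i_1,i_2,\ldots,i_k,i_1)$ of $\G(f)(x)$, its image cycle under $\sigma$ has sign bit
\[
\sum_{\ell=1}^{k} \varepsilon(\sigma(i_\ell),\sigma(i_{\ell+1}),U(x)) = \sum_{\ell=1}^{k}\varepsilon(i_\ell,i_{\ell+1},x) + \sum_{\ell=1}^{k}\bigl((U_0)_{\sigma(i_{\ell+1})} + (U_0)_{\sigma(i_\ell)}\bigr),
\]
and the second sum equals $2\sum_\ell (U_0)_{\sigma(i_\ell)} = 0$ in $\B$. Therefore corresponding cycles carry the same sign, which concludes the proof.

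The only delicate step is recognizing that although individual edge signs can flip under an isometry whose translation part $U_0$ is nonzero, the flips cancel in pairs along any closed walk. Once the formula $\varepsilon(\sigma(j),\sigma(i),U(x)) - \varepsilon(j,i,x) = (U_0)_{\sigma(i)} + (U_0)_{\sigma(j)}$ is written down, the cancellation is immediate because each vertex contributes twice (once as a head, once as a tail) around a cycle.
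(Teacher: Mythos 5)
Your proof is correct. The first half (the underlying directed graphs are isomorphic via $\sigma$) is essentially the computation in the paper, just phrased coordinatewise: writing $U(a)+U(b)=P(a+b)$ and reading off the $\sigma(i)$-th coordinate is the same as the paper's derivation of $\partial_if(U(x))=e^{\sigma(J)}$ with $e^J=\partial_{\sigma^{-1}(i)}f(x)$. For the sign statement, however, you take a genuinely different route. The paper does not touch individual edge signs at all: it invokes Lemma \ref{lem:sgnpar}, by which the sign of a cycle of $\G(f)(x)$ with vertex set $J$ is determined by the parity of $|I\cap J|$, where $e^I=f(x)+x$ is the set of degrees of freedom of $x$; since $\sigma$ maps degrees of freedom of $x$ to those of $U(x)$ and preserves cardinalities of intersections, corresponding cycles have the same sign. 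You instead work at the edge level with the criterion $\varepsilon(j,i,x)=x_j+f_i(x)$, derive the exact transformation law $\varepsilon(\sigma(j),\sigma(i),U(x))=\varepsilon(j,i,x)+(U_0)_{\sigma(i)}+(U_0)_{\sigma(j)}$, and observe that the correction terms telescope to $0$ in \B around any closed walk because each vertex occurs once as a head and once as a tail. Both arguments are sound. The paper's is shorter given that Lemma \ref{lem:sgnpar} is already available and used elsewhere; yours is self-contained (it does not presuppose that parity lemma) and yields strictly more information, namely how each individual edge sign transforms (it can flip when the translation part $U_0$ is nonzero) and, more generally, that the sign of any path changes only by the boundary term $(U_0)_{\sigma(u)}+(U_0)_{\sigma(v)}$ attached to its endpoints $u,v$, which is why it vanishes on cycles.
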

\begin{proof}
Let $\sigma\in\Sig_n$ be the permutation associated with $U$. Then:
\begin{align*}
\partial_if(U(x)) &= f(U(x))+f(U(x)+e^i) \\
&= f(U(x))+f(U(x+e^k)) \quad \text{for $k=\sigma^{-1}(i)$} \\
&= U(f(x))+U(f(x+e^k)) \quad \text{by equivariance} \\
&= U(f(x))+U(f(x)+e^J) \quad \text{where $e^J=\partial_kf(x)$} \\
&= e^{\sigma(J)}.
\end{align*}
Hence $(k,j)$ is an edge of $\G(f)(x)$ is and only if $(\sigma(k),\sigma(j))$ is an edge of $\G(f)(U(x))$: $\sigma$ is an isomorphism of directed graphs.

To show that it preserves the signs of cycles, let us start by observing that it preserves the degrees of freedom: if $f(x)+x=e^I$, then $U(f(x))+U(x)=e^{\sigma(I)}$. A cycle $C$ of $\G(f)(x)$ with vertex set $J$ corresponds to the cycle $\sigma(C)$ of $\G(f)(U(x))$ with vertex set $\sigma(J)$. By Lemma \ref{lem:sgnpar}, $C$ is positive if and only if $I\cap J$ has even cardinality, if and only if $\sigma(I)\cap\sigma(J)$ has even cardinality, \ie when $\sigma(C)$ is positive.
\end{proof}
\begin{lemma}
\label{lem:fequiv}
The Boolean network $f$ defined in Section \ref{sec:def} is $T$-equivariant.
\end{lemma}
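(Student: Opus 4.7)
The plan is to verify $f \circ T = T \circ f$ by cases on which of the sets $\{a^i\}, \{b^i\}, \{c^i\}, \{d^i\}, \B^n \setminus A$ contains $x$, exploiting the fact that $T$ permutes $A$ cyclically in each of these four families (Lemma \ref{lem:abcdequiv}) and the fact that $T$ is affine over \B.

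First I would record that for any $x,y\in\B^n$,
\[
T(x+y) = S(x+y)+e^1 = S(x)+S(y)+e^1 = T(x)+S(y),
\]
and that $S(e^k)=e^{k+1}$ (indices mod $n$), so $T$ sends $e^k$-translations to $e^{k+1}$-translations: $T(x+e^k)=T(x)+e^{k+1}$. This is the only algebraic fact I'll need beyond Lemma \ref{lem:abcdequiv}.

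Next, for $x=a^i$ both sides give $a^{i+2}$ by Lemma \ref{lem:abcdequiv} and the definition of $f$ on the cycle. For $x=b^i$ (resp.\ $c^i$), Lemma \ref{lem:abcdequiv} gives $T(b^i)=b^{i+1}$ (resp.\ $c^{i+1}$), hence $f(T(x))=a^{i+4}=T(a^{i+3})=T(f(x))$. The only case with a non-trivial verification is $x=d^i$: I would compute
\[
T(f(d^i)) = T(a^{i+4}+e^{i+1}) = T(a^{i+4})+S(e^{i+1}) = a^{i+5}+e^{i+2},
\]
and compare with $f(T(d^i))=f(d^{i+1})=a^{i+5}+e^{i+2}$.

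Finally, for $x\notin A$ I need $T(x)\notin A$ so that both $f(T(x))$ and $T(f(x))=T(x)$ coincide. This is immediate from Lemma \ref{lem:abcdequiv}: $T$ restricts to a bijection of $A$ onto itself, hence also of $\B^n\setminus A$ onto itself. The only potential obstacle is bookkeeping of indices modulo $2n$ in the $d^i$ case, but this is handled uniformly by the identities $a^{i+2n}=a^i$ and $e^{i+n}=e^i$ built into the definitions in Section \ref{sec:def}.
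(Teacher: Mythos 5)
Your proposal is correct and follows essentially the same route as the paper: a case analysis over the families $a^i,b^i,c^i,d^i$ using the identity $T(x+e^k)=T(x)+e^{k+1}$ (i.e.\ Lemma \ref{lem:abcdequiv}), plus the observation that $T(A)=A$ forces $T$ to preserve the set of fixed points. You even spell out the $d^i$ computation that the paper dismisses with ``similarly,'' so nothing is missing.
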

\begin{proof}
We have:
\begin{align*}
(f\circ T)(a^i) &= f(a^{i+1})=a^{i+2}=T(a^{i+1})=(T\circ f)(a^i) \\
(f\circ T)(b^i) &= f(b^{i+1}) = a^{i+4} = T(a^{i+3}) = (T\circ f)(b^i),
\end{align*}
and similarly, $(f\circ T)(c^i)=(T\circ f)(c^i)$ and $(f\circ T)(d^i)=(T\circ f)(d^i)$. Finally, we have noticed in particular that
$$
T(A^i)=A^{i+1}
$$
for any $i$, whence $T(A)=A$. Therefore, if $x$ is a fixed point, so is $T(x)$, and then $(f\circ T)(x)=f(x)=(T\circ f)(x)$.
\end{proof}

\subsection{No local negative cycle}
\label{sec:noneg}
As a consequence of Lemmas \ref{lem:equiv} and \ref{lem:fequiv}, to prove Theorem B, it suffices to check that, when $n$ is large enough, for any $x\in A^0=\{a^0,b^0,c^0,d^0\}$, $\G(f)(x)$ has no negative cycle. Since the interaction graphs $\G(f)(x)$ depend on $N(x,1)$, we shall need the following list of all neighbors of points in $A^0$ which belong to $A$.
\begin{lemma}
\label{lem:neighbors}
If $n\geqslant 7$, then:
\begin{align*}
A\cap N(\{a^0\},1) &= \{a^{-1},a^0,a^1,b^{-2},b^0,c^0\} \\
A\cap N(\{b^0\},1) &= \{a^0,a^2,b^0,c^{-1}\} \\
A\cap N(\{c^0\},1) &= \{a^0,b^1,c^0,d^0\} \\
A\cap N(\{d^0\},1) &=
\begin{cases}
\{c^0,d^0\} & \text{ if $n\geqslant 8$} \\
\{c^0,d^0,d^{-5},d^{5}\} & \text{ if $n=7$.}
\end{cases}
\end{align*}
In other terms:
\begin{align*}
A\cap N(\{a^0\},1) &= a^0+\{0,e^{-2},e^{-1},e^0,e^1,e^2\} \\
A\cap N(\{b^0\},1) &= b^0+\{0,e^{-1},e^0,e^1\} = a^0+\{0,e^{-1,1},e^{0,1},e^1\} \\
A\cap N(\{c^0\},1) &= c^0+\{0,e^0,e^2,e^3\} = a^0+\{0,e^{0,2},e^2,e^{2,3}\} \\
A\cap N(\{d^0\},1) &=
\begin{cases}
d^0+\{0,e^3\} = a^0+\{e^2,e^{2,3}\} & \text{ if $n\geqslant 8$} \\
d^0+\{0,e^3,e^4,e^6\} = a^0+\{e^2,e^{2,3},e^{2,3,4},e^{2,3,6}\} & \text{ if $n=7$.}
\end{cases}
\end{align*}
\end{lemma}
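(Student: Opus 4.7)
The plan is to verify the four equalities by direct computation of the Hamming distances $d(x, y)$ for $x \in \{a^0, b^0, c^0, d^0\}$ and $y$ ranging over the points of $A = \bigcup_{i=1}^{2n} A^i$. I first make the centers explicit: since $a^0 = a^{2n} = \overline{e^{1,\ldots,n-1}} = e^n$, one has $a^0 = e^n$, $b^0 = e^{1,n}$, $c^0 = e^{2,n}$, and (because $n \geqslant 7$) $d^0 = e^{2,3,n}$.

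Next, I establish a cyclic-distance formula for the backbone, namely $d(a^i, a^j) = \min(|i-j|, 2n - |i-j|)$ for $1 \leqslant i, j \leqslant 2n$, by splitting on whether $i, j$ lie in $\{1,\ldots,n\}$ or $\{n+1,\ldots,2n\}$ and computing $a^i + a^j$ directly. Since $b^j, c^j, d^j$ each differ from $a^j$ by a vector of weight at most $2$, the triangle inequality gives $d(x^0, y^j) \geqslant d(a^0, a^j) - 4$; hence $d(x^0, y^j) \leqslant 1$ forces the cyclic distance between $j$ and $0$ on $\Z / 2n \Z$ to be at most $5$. This cuts the problem down to a finite, $n$-independent set of candidate indices.

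I then enumerate: for each $x, y \in \{a, b, c, d\}$ and each $j$ in this finite window, I compute $x^0 + y^j$ explicitly and record those pairs of weight at most $1$. For $N(\{a^0\}, 1)$, $N(\{b^0\}, 1)$, and $N(\{c^0\}, 1)$, the only coordinates that can appear in the relevant sums lie in $\{1, 2, 3, n-2, n-1, n\}$, and these six indices are pairwise distinct as soon as $n \geqslant 7$; so no wraparound collisions occur and the three displayed lists drop out uniformly.

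The only subtle case is $N(\{d^0\}, 1)$. For $y = d$ and $j$ in the far half of the window, the pair $e^{j+2}, e^{j+3}$ appearing in $d^j = a^j + e^{j+2,j+3}$ involves indices taken modulo $n$, and can then overlap with $\{2, 3, n\}$ in the support of $d^0$. For $n \geqslant 8$, checking the finite window shows no such coincidence, leaving only $c^0$ and $d^0$ itself. For $n = 7$, however, the values $j = \pm 5 \pmod{14}$ wrap around: $d^5 = a^5 + e^{7,1}$ and $d^{-5} = a^{-5} + e^{4,5}$, and the explicit computations $d^0 + d^5 = e^4$ and $d^0 + d^{-5} = e^6$ produce exactly the two extra neighbors listed in that case. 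This wraparound bookkeeping is the main point of care, and it is where the dichotomy $n \geqslant 8$ versus $n = 7$ in the statement comes from.
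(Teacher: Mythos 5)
Your overall strategy coincides with the paper's: reduce everything to the sums $x^0+y^j$, use the fact that $b^j,c^j,d^j$ lie within Hamming distance $2$ of $a^j$ to confine the candidate indices to a finite window ($|j|\leqslant 5$ cyclically --- the paper gets the same window by bounding the cardinality of the interval $S_j$ with $a^0+a^j=e^{S_j}$), and then enumerate the ten pair types explicitly. Your identification of the centers, the triangle-inequality bound, and the two exceptional neighbors $d^{\pm5}$ of $d^0$ at $n=7$ (with $d^0+d^5=e^4$ and $d^0+d^{-5}=e^6$) all agree with the paper's appendix.

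There is, however, one step whose stated justification is wrong. You claim that for $N(\{a^0\},1)$, $N(\{b^0\},1)$ and $N(\{c^0\},1)$ the only coordinates occurring in the relevant sums lie in $\{1,2,3,n-2,n-1,n\}$, and you infer from this that no wraparound collisions can occur outside the $d^0$ case. That premise is false: within your window the sums involve representatives as large as $7$ and as small as $-4$ (e.g.\ $a^0+c^2=e^{\{0,1,4\}}$, $b^0+d^{4}=e^{\{0,2,3\}\bigtriangleup\{6,7\}}$, $c^0+d^{4}=e^{\{0,1,3\}\bigtriangleup\{6,7\}}$, $b^0+d^{-4}=e^{\{-4,-3,1\}}$), and collisions modulo $n$ genuinely happen: at $n=7$ the residue $7$ coincides with $0$, collapsing $b^0+d^4$ and $c^0+d^4$ from five-element to three-element sets. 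These collisions happen not to create any point at distance $\leqslant 1$, but that is not automatic --- the paper points out that $c^0+d^4$ \emph{would} become a singleton at $n=6$, so the margin is thin --- and your argument as written never checks them. The same modular bookkeeping you carry out for $N(\{d^0\},1)$ must be done for all ten pair types, for every $n$ small enough that two representatives in the window can coincide (in practice the only delicate value is $n=7$). Once that enumeration is actually performed modulo $n$, as in the paper's appendix, the four lists follow; so this is a repairable gap in the uniformity argument rather than a flaw in the method, but as it stands the claim that the first three lists ``drop out uniformly'' is unsupported.
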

Like the proof of Proposition \ref{prop:wd}, the proof of Lemma \ref{lem:neighbors} amounts to solve equations between sets of integers modulo $n$. Their common proof is postponed to \ref{sec:appen}.

By Lemma \ref{lem:neighbors}, we know $\G(f)(x)$ for any $x\in A^0$ and $n\geqslant 7$. We actually do not need a full computation of $\G(f)(x)$, but simply of its possible negative cycles: by Lemma \ref{lem:sgnpar}, these are cycles of $\G(f)(x)$ through an odd number of degrees of freedom of $x$.
\begin{description}
\item[($a^0$)] 
Since $f(a^0)=a^1=a^0+e^0$, a negative cycle of $\G(f)(a^0)$, if any, must pass through $0$. But $\partial_0f(a^0)=f(a^0)+f(a^1)=e^1$, hence in $\G(f)(a^0)$, $0$ has only one outgoing edge $(0,1)$. Then:
$$
\partial_1f(a^0)=f(a^0)+f(a^0+e^1)=f(a^0)+f(b^0)=a^1+a^3=e^{1,2},
$$
hence $1$ has only two outgoing edges $(1,1)$ and $(1,2)$. Finally, 
$$
\partial_2f(a^0)=f(a^0)+f(a^0+e^2)=f(a^0)+f(c^0)=a^1+a^3=e^{1,2},
$$
and $2$ has only two outgoing edges $(2,1)$ and $(2,2)$. Therefore no path in $\G(f)(a^0)$ starting from $0$ may loop, and $\G(f)(a^0)$ has no negative cycle.
\item[($b^0$)] 
Since $b^0$ has two degrees of freedom, $0$ and $2$, we are interested in paths in $\G(f)(b^0)$ starting from $0$ or $2$. But:
$$
\partial_0f(b^0) = f(b^0)+f(b^0+e^0)=f(b^0)+f(a^2)=a^3+a^3=0
$$
hence $0$ has no outgoing edge, and:
$$
\partial_2f(b^0) = f(b^0)+f(b^0+e^2)=a^3+b^0+e^2=e^0,
$$
because by Lemma \ref{lem:neighbors}, $b^0+e^2=a^0+e^{1,2}\notin A$ and is a fixed point, hence $2$ has only one outgoing edge $(2,0)$. Therefore $\G(f)(b^0)$ has no negative cycle.
\item[($c^0$)] 
Similarly, since $c^0$ has two degrees of freedom, $0$ and $1$, we are interested in paths in $\G(f)(c^0)$ starting from $0$ or $1$:
\begin{align*}
\partial_0f(c^0) &= f(c^0)+f(c^0+e^0)=f(c^0)+f(b^1)=a^3+a^4=e^3 \\
\partial_3f(c^0) &= f(c^0)+f(c^0+e^3)=f(c^0)+f(d^0)=a^3+a^4+e^1=e^{1,3} \\
\partial_1f(c^0) &= f(c^0)+f(c^0+e^1)=a^3+c^0+e^1=e^0
\end{align*}
because by Lemma \ref{lem:neighbors}, $c^0+e^1=a^0+e^{1,2}=b^0+e^2\notin A$ and is a fixed point, hence the only cycle though $0$ or $1$ in $\G(f)(c^0)$ passes though both $0$ and $1$, and is therefore positive by Lemma \ref{lem:sgnpar}.
\item[($d^0$)] 
Since $d^0$ has one degree of freedom, $0$, we need to compute:
$$
\partial_0f(d^0)=f(d^0)+f(d^0+e^0)=d^0+e^0+d^0+e^0=0
$$
because by Lemma \ref{lem:neighbors}, $d^0+e^0=a^0+e^{0,2,3}\notin A$ and is a fixed point. (The exceptional neighbors of $d^0$ in $A$ when $n=7$ satisfy $d^5=d^0+e^4$ and $d^{-5}=d^0+e^{-1}=e^6$, and are thus different from $d^0+e^0$.) Hence $0$ has no outgoing edge and $\G(f)(d^0)$ has no negative cycle.
\end{description}
This concludes the proof of Theorem B.

\section{Additionnal remarks}
\label{sec:addition}

\subsection{Non-expansive networks}
In the non-expansive case, it is actually possible to slightly improve the result in \cite{Ric11} (point $3$ of Theorem \ref{th:knownneg}) as follows. This improvement has been independently proved in \cite{Ric15} (Remark 4).
\begin{remark}
\label{th:nonexpneg}
Let $f:\B^n\rightarrow\B^n$ be a non-expansive Boolean network. If $f$ has a cyclic attractor, then $\G(f)$ has a local negative cycle.
\end{remark}
We recall the definition of a subcube. Given $x\in\B^n$ and $I\subseteq\{1,\ldots,n\}$, the subset $x[I]$ consists of all points $y$ such that $y_i=x_i$ for each $i\notin I$; subsets of the form $x[I]$ are called $I$-\emph{subcubes}, or simply \emph{subcubes} of $\B^n$. For any subcube $\kappa$, $f\mathpunct{\restriction}_\kappa:\kappa\rightarrow\kappa$ is defined in the obvious way: if $\kappa=x[I]$ and $y\in\kappa$,
$$
(f\mathpunct{\restriction}_\kappa(y))_i=
\begin{cases}
f_i(y) & \text{if $i\in I$} \\
x_i & \text{otherwise.}
\end{cases}
$$
This definition is compatible with interaction graphs: $\G(f\mathpunct{\restriction}_\kappa)(y)$ is the signed subgraph of $\G(f)(y)$ induced by $I$.
\begin{proof}
Assume that $f$ has a cyclic attractor. Let $\kappa$ be a minimal subcube such that $f\mathpunct{\restriction}_\kappa$ has a cyclic attractor, and let $I$ be such that $\kappa$ is an $I$-subcube. Since $f$ is non-expansive, $g=f\mathpunct{\restriction}_\kappa$ has to be non-expansive as well \cite{Ric11}.

Besides, for any $i\in I$, there exist $x,y\in\kappa$ such that $x_i\neq y_i$ and $g(x)+x=g(y)+y=e^i$. To see this, let $A$ be the set of points of a cyclic attractor of $g$, and let $\kappa^0$ (\resp $\kappa^1$) be the subcube of $\kappa$ defined by $x_i=0$ (\resp $x_i=1$). By minimality of $\kappa$, $f\mathpunct{\restriction}_{\kappa^0}$ and $f\mathpunct{\restriction}_{\kappa^1}$ have no cyclic attractor, hence, in particular, any point in $A\cap\kappa^0$ (\resp $A\cap\kappa^1$) is the beginning of a trajectory to some fixed point $x$ of $g\mathpunct{\restriction}_{\kappa^0}$ (\resp $y$ of $g\mathpunct{\restriction}_{\kappa^1}$). Since $x,y\in A$, they are not fixed points of $g$, thus $g(x)=x+e^i$ and $g(y)=y+e^i$, as required.

By \cite{Ric11}, the existence of such a pair $x,y$ suffices to entail a local negative cycle in $\G(g)$, hence in $\G(f)$, as the signed graphs $\G(g)(z)$ are induced subgraphs of $\G(f)(z)$.
\end{proof}

\subsection{Hoopings and invertible Jacobian matrices}
This is a minor remark on one of the few techniques proposed for proving the existence of a local negative cycle.

It is proved in \cite{Rue14} that if $x$ has odd out-degree in $\Gamma(f)$ and the Jacobian matrix $\J(f)(x)$ is invertible, then $\G(f)(x)$ has a negative cycle. Actually, if a \emph{hooping} is a spanning subgraph consisting of disjoint cycles \cite{Sou03}, whose \emph{sign} is the product of the sign of its edges (the definition of signs here differs from \cite{Sou03}), then a network with $\J(f)(x)$ invertible at some $x$ with odd out-degree must have an odd number of negative hoopings (by Lemma \ref{lem:sgnpar}), thus at least one, hence it has a negative cycle.

But for large enough $n$, if $\Gamma(f)$ consists of the antipodal attractive cycle $(0,\ldots,e^{1,\ldots,n-1},\overline{0},\ldots,\overline{e^{1,\ldots,n-1}},0)$ and all other points fixed, then the matrices of all local interaction graphs $\G(f)(x)$ have several equal columns (thus no hooping) and are consequently not invertible. Showing one of these two sufficient conditions for a local negative cycle (hooping or invertible Jacobian matrix) is therefore hopeless, at least for general Boolean networks.

\subsection{Reduction and local interaction graphs}
Reduction (Section \ref{sec:red}) does not preserve local interaction graphs in general, but it is possible to give a more precise account of the relationship.
\begin{proposition}
\label{prop:redjac}
If $\G(f)$ has no loop on $n$, $i,j\in\{1,\ldots,n-1\}$ and $x\in\B^{n-1}$, then $\partial_jf'_i(x)=\partial_jf_i(x')+\partial_jf_n(x')\cdot\partial_nf_i(x'+e^j)$.
\end{proposition}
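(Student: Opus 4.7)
The plan is to unfold the derivative $\partial_j f'_i(x) = f'_i(x) + f'_i(x+e^j)$ using the definition $f'(x) = f(x',)$ where $x' = (x, f_n(x,-))$, and then to relate $(x+e^j)'$ to $x'$ by tracking how the expanded $n$-th coordinate changes when we perturb $x$ by $e^j$. The key observation is that, because $\G(f)$ has no loop on $n$, the map $\phi(x) := f_n(x,-)$ is a well-defined function of $x \in \B^{n-1}$, and $f_n(x') = \phi(x)$, $f_n(x' + e^j) = \phi(x+e^j)$ (where $e^j$ is now viewed as a vector in $\B^n$ with zero last coordinate). Consequently
\[
\partial_j f_n(x') = \phi(x) + \phi(x+e^j),
\]
so that
\[
(x+e^j)' \;=\; x' + e^j + \partial_j f_n(x') \cdot e^n
\]
in $\B^n$.

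From here I would split on the value of $\partial_j f_n(x') \in \B$. If $\partial_j f_n(x') = 0$, then $(x+e^j)' = x' + e^j$, hence $\partial_j f'_i(x) = f_i(x') + f_i(x' + e^j) = \partial_j f_i(x')$, which matches the claimed formula since the second term vanishes. If $\partial_j f_n(x') = 1$, then $(x+e^j)' = x' + e^j + e^n$, and flipping the last coordinate contributes exactly $\partial_n f_i(x' + e^j)$:
\[
f_i(x' + e^j + e^n) \;=\; f_i(x' + e^j) + \partial_n f_i(x' + e^j).
\]
Substituting yields $\partial_j f'_i(x) = \partial_j f_i(x') + \partial_n f_i(x' + e^j)$, which again matches the formula since the coefficient is $1$. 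Combining both cases in $\B$ gives the identity.

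The only real obstacle is bookkeeping: one has to be careful that $e^j$ denotes a vector in $\B^{n-1}$ inside the argument of $f'$ and a vector in $\B^n$ (with vanishing $n$-th coordinate) inside the argument of $f$, and that the $n$-th coordinate of $(x+e^j)'$ can differ from that of $x'+e^j$ precisely by $\partial_j f_n(x')$. Once this is clearly set up, the rest is a one-line case analysis on a bit in $\B$.
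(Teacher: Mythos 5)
Your proof is correct and follows essentially the same route as the paper's: both reduce the claim to a case analysis on the bit $\partial_j f_n(x')$, observing that it measures whether the $n$-th coordinate of $(x+e^j)'$ differs from that of $x'+e^j$, and that in the nonzero case the discrepancy contributes exactly $\partial_n f_i(x'+e^j)$. The only difference is presentational (you track the point $(x+e^j)'$ explicitly, while the paper directly expands the sum $\partial_j f'_i(x)+\partial_j f_i(x')$), and your bookkeeping of $e^j$ in $\B^{n-1}$ versus $\B^n$ is handled correctly.
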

\begin{proof}
\begin{align*}
\partial_jf'_i(x)+\partial_jf_i(x') = \ & f'_i(x)+f'_i(x+e^j)+f_i(x')+f_i(x'+e^j) \\
= \ & f_i(x,f_n(x,-))+f_i(x+e^j,f_n(x+e^j,-)) \\
& +f_i(x,f_n(x,-))+f_i(x+e^j,f_n(x,-)) \\
= \ & f_i(x+e^j,f_n(x+e^j,-))+f_i(x+e^j,f_n(x,-)).
\end{align*}
On the other hand:
$$
\partial_jf_n(x')=f_n(x')+f_n(x'+e^j)=f_n(x,-)+f_n(x+e^j,-).
$$
Therefore, if $\partial_jf_n(x')=0$, we have $f_n(x,-)=f_n(x+e^j,-)$ and $\partial_jf'_i(x)+\partial_jf_i(x')=0$. Otherwise, 
$\partial_jf_n(x')=1$ and $f_n(x,-)\neq f_n(x+e^j,-)$, whence
\begin{align*}
\partial_jf'_i(x)+\partial_jf_i(x') &= f_i(x+e^j,0)+f_i(x+e^j,1) \\
&= \partial_nf_i(x'+e^j).
\end{align*}
In both cases, $\partial_jf'_i(x)+\partial_jf_i(x')=\partial_jf_n(x')\cdot\partial_nf_i(x'+e^j)$, as required.
\end{proof}
This has an immediate consequence for global interaction graphs: an edge $(j,i)$ of $\G(f')$ is either an edge $(j,i)$ of $\G(f)$ or the result of a pair of consecutive edges $(j,n)$ and $(n,i)$ in $\G(f)$. In Section \ref{sec:qdf}, we shall say that the edges $(j,i),(j,n),(n,i)$ in $\G(f)$ are \emph{above} the edge $(j,i)$ of $\G(f')$, and more generally that a path or a cycle of $\G(f)$ is \emph{above} a path or a cycle of $\G(f')$ when it consists of edges above those of $\G(f')$.

When moreover $f$ and $n$ are such that for any $i,j$, either $(j,n)$ or $(j,i)$ is not an edge of the global graph $\G(f)$, we may then note that $\partial_jf'_i(x)$ equals
\begin{align*}
\text{either } & \partial_jf_i(x') \\
\text{or } & \partial_jf_n(x')\cdot\partial_nf_i(x'+e^j)=\partial_jf_n(x')\cdot\partial_nf_i(x').
\end{align*}
The last equality holds because:
$$
\partial_nf_i(x'+e^j)+\partial_nf_i(x')=\partial_jf_i(x'+e^n)+\partial_jf_i(x')=0.
$$
In that case\footnote{Using second order derivatives as in \cite{Rue14}, we may observe that the above line equals simply $\partial_{n,j}f_i(x')$.}, more can be said about local interaction graphs: an edge $(j,i)$ in $\G(f')(x)$ is then the result of either an edge $(j,i)$ in $\G(f)(x')$ (first case in the above alternative) or a pair of consecutive edges $(j,n)$ and $(n,i)$ in $\G(f)(x')$ (second case).

\section{Conclusion}
\label{sec:concl}

Theorems A and B set limits to the possible relationships between local negative cycles and asymptotic dynamical properties. There is a rather straightforward but interesting abstract difference between these two kinds of properties: the structural properties (absence of some local cycles) are hereditary (in the sense that if a network has the property, then so has the \emph{subnetwork} induced by any subcube), while the dynamic properties are clearly not. This remark is by the way very useful in the case of positive cycles \cite{Rue14}. Going further in this direction for general Boolean networks probably requires to find either hereditary counterparts on the dynamics side or a non-hereditary property involving cycles.

On the other hand, we mentioned in Section \ref{sec:dfcount} a open question for the class of and-nets: do and-nets with an attractive cycle (a non necessarily antipodal one) have a local negative cycle? A positive answer would improve point $5$ of Theorem \ref{th:knownneg}. Understanding the role of local negative cycles in classes of networks generalizing and-nets, like canalyzing or nested canalyzing networks \cite{Lau07}, seems interesting.

\appendix
\section{Characterization of isometries of $\B^n$}
\label{sec:isomproof}

The claim about isometries at the beginning of Section \ref{sec:def} amounts to verify the following:
\begin{proposition}
\label{lem:isom}
If $U$ is an isometry of $\B^n$, there exists a (unique) permutation $\sigma\in\Sig_n$ such that for any $x\in\B^n$ and $I\subseteq\{1,\ldots,n\}$:
$$
U(x+e^I)=U(x)+e^{\sigma(I)},
$$
where $\sigma(I)=\{\sigma(i),i\in I\}$.
\end{proposition}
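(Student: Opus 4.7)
The plan is to first reduce to the case where $U$ fixes the origin, then identify the permutation $\sigma$ from the action of $U$ on the unit sphere around $0$, and finally extend the identity from singletons to arbitrary subsets by an induction using the metric structure of the hypercube.

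First I would observe that any distance-preserving self-map of the finite set $\B^n$ is automatically injective, hence a bijection. Composing $U$ with the translation $\tau(y)=y+U(0)$, which is itself an isometry of $\B^n$, yields an isometry $V=\tau\circ U$ with $V(0)=0$. Since translations commute with $+$, it suffices to prove the statement for $V$: indeed, if $V(x+e^I)=V(x)+e^{\sigma(I)}$ for all $x,I$, then $U(x+e^I)=V(x+e^I)+U(0)=U(x)+e^{\sigma(I)}$. So I assume $U(0)=0$. The unit sphere around $0$ is $\{e^1,\ldots,e^n\}$ and is preserved by $U$, so the formula $U(e^i)=e^{\sigma(i)}$ defines a map $\sigma:\{1,\ldots,n\}\to\{1,\ldots,n\}$, which is a permutation since $U$ is a bijection.

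Next I would prove $U(e^I)=e^{\sigma(I)}$ by induction on $|I|$. The cases $|I|\leqslant 1$ are immediate. For $|I|\geqslant 2$, pick two distinct $i,j\in I$; by induction, $U(e^{I\setminus\{i\}})=e^{\sigma(I\setminus\{i\})}$ and $U(e^{I\setminus\{j\}})=e^{\sigma(I\setminus\{j\})}$. Since $U$ is an isometry, $U(e^I)$ must lie at distance $1$ from both of these points and at distance $|I|$ from $U(0)=0$. A direct computation, using that $\sigma(I\setminus\{i\})\triangle\sigma(I\setminus\{j\})=\{\sigma(i),\sigma(j)\}$, shows that the points of $\B^n$ lying at distance $1$ from both $e^{\sigma(I\setminus\{i\})}$ and $e^{\sigma(I\setminus\{j\})}$ are exactly $e^{\sigma(I)}$ (at distance $|I|$ from $0$) and $e^{\sigma(I\setminus\{i,j\})}$ (at distance $|I|-2$ from $0$). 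The distance-to-$0$ constraint therefore forces $U(e^I)=e^{\sigma(I)}$, completing the induction.

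The general formula then follows algebraically: for arbitrary $x\in\B^n$, writing $x=e^J$ with $J=\{i:x_i=1\}$, we have $x+e^I=e^{J\triangle I}$, hence $U(x+e^I)=e^{\sigma(J\triangle I)}=e^{\sigma(J)}+e^{\sigma(I)}=U(x)+e^{\sigma(I)}$. Uniqueness of $\sigma$ is immediate from $U(0)+e^{\sigma(i)}=U(e^i)$, which pins down $\sigma(i)$ as the unique coordinate on which $U(e^i)$ and $U(0)$ differ. The only substantive step is the two-point distance argument inside the induction; once the two candidate points are identified and their distances from $0$ computed, the rest of the proof is mechanical.
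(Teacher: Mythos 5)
Your proof is correct, but it takes a genuinely different route from the paper's. The paper defines $\sigma(i)$ as the direction $i'$ in $U(x+e^i)=U(x)+e^{i'}$ and makes the crux of the argument the fact that $i'$ does not depend on the base point $x$; this is established by a local propagation argument (an isometry must send unit squares of the hypercube to unit squares, so the direction is constant along edge-paths, and $\B^n$ is connected). Once that is in place, the general formula follows by a one-line induction on $|I|$, valid at every $x$ simultaneously. You instead normalize to $U(0)=0$ by composing with a translation, read $\sigma$ off the image of the unit sphere at the origin, and prove $U(e^I)=e^{\sigma(I)}$ by an induction whose step pins down $U(e^I)$ as one of the two common neighbors of $e^{\sigma(I\setminus\{i\})}$ and $e^{\sigma(I\setminus\{j\})}$, discriminating between them by the distance to the origin; the statement for arbitrary $x$ then falls out algebraically because $\sigma$, being a bijection, commutes with symmetric difference, so $e^{\sigma(J\bigtriangleup I)}=e^{\sigma(J)}+e^{\sigma(I)}$. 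Both arguments are elementary and complete (your two-candidate computation, using $\sigma(I\setminus\{i\})\bigtriangleup\sigma(I\setminus\{j\})=\{\sigma(i),\sigma(j)\}$, does check out). The paper's version buys the base-point independence of $\sigma$ directly, which is the form in which the proposition is used elsewhere; yours avoids the square-propagation argument entirely at the cost of a slightly more delicate inductive step anchored at a single reference point.
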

\begin{proof}
The claim holds trivially for $I=\varnothing$.

For any $i\in\{1,\ldots,n\}$, $d(U(x+e^i),U(x))=1$ since $U$ is an isometry, hence $U(x+e^i)=U(x)+e^{i'}$ for some $i'$. This $i'$ does not depend on $x$ because unit squares in $\B^n$ such as:
$$
\begin{array}{@{\extracolsep{6mm}}cccc}
&&& \\
\rnode{x}{x} & \rnode{xx}{x+e^{k_1}} & \rnode{xxx}{x+e^{k_1,k_2}} & \\[4mm]
&&& \cdots \\[4mm]
\rnode{y}{x+e^i} & \rnode{yy}{x+e^{i,k_1}} & \rnode{yyy}{x+e^{i,k_1,k_2}} & \\
\psset{nodesep=3pt,arrows=-}
\ncline{x}{xx}\naput{k_1}\ncline{xx}{xxx}\naput{k_2}
\ncline{y}{yy}\nbput{k_1}\ncline{yy}{yyy}\nbput{k_2}
\ncline{x}{y}\nbput{i}\ncline{xx}{yy}\nbput{i}\ncline{xxx}{yyy}\naput{i}
\end{array}
$$
must be mapped by the isometry $U$ to unit squares:
$$
\begin{array}{@{\extracolsep{6mm}}cccc}
&&& \\
\rnode{ux}{U(x)} & \rnode{uxx}{U(x+e^{k_1})} & \rnode{uxxx}{U(x+e^{k_1,k_2})} & \\[4mm]
&&& \cdots \\[4mm]
\rnode{uy}{U(x+e^i)} & \rnode{uyy}{U(x+e^{i,k_1})} & \rnode{uyyy}{U(x+e^{i,k_1,k_2})} & \\
\psset{nodesep=3pt,arrows=-}
\ncline{ux}{uxx}\naput{k'_1}\ncline{uxx}{uxxx}\naput{k'_2}
\ncline{uy}{uyy}\nbput{k'_1}\ncline{uyy}{uyyy}\nbput{k'_2}
\ncline{ux}{uy}\nbput{i'}\ncline{uxx}{uyy}\nbput{i'}\ncline{uxxx}{uyyy}\naput{i'}
\end{array}
$$
so that for any $y$, letting $e^K=x+y$, we have $U(y+e^i)=U(x+e^K+e^i)=U(x+e^K)+e^{i'}$.
Hence we may define $\sigma(i)=i'$. The function $\sigma$ thus defined is a permutation because it is injective: if $\sigma(i)=\sigma(j)$, then $U(e^i)=U(0)+e^{\sigma(i)}=U(e^j)$ and $i=j$.

We now proceed by induction on the cardinality of $I\neq\varnothing$. Let $i\in I$:
\begin{align*}
U(x+e^I) &= U(x+e^i+e^{I\setminus\{i\}}) \\
&= U(x+e^i)+e^{\sigma(I\setminus\{i\})} \quad \text{by induction} \\
&= U(x)+e^{\sigma(i)}+e^{\sigma(I\setminus\{i\})} \\
&= U(x)+e^{\sigma(I)}
\end{align*}
concludes the proof.
\end{proof}

\section{Proof of Proposition \ref{prop:wd} and Lemma \ref{lem:neighbors}}
\label{sec:appen}

Let $\inter=\{i,i+1,\ldots,j-1\}$, where integers are considered modulo $n$, and $\bigtriangleup$ denote the symmetric difference of sets.

We observe that knowing the $b^i$'s which are neighbors of $a^0$ gives, by symmetry, the $a^i$'s neighbors of $b^0$, etc, so that only $4+3+2+1$ cases need to be considered. The following is a straighforward computation.
\begin{lemma}
\label{lem:abcd}
Let $-n<i\leqslant n$ and $S_i=\zinter$ if $i\geqslant 0$, $\interz$ if $i<0$. The sets $S_i$ are therefore a family of $2n$ sets of integers modulo $n$, and $S_i$ has cardinality $|i|$.
\begin{alignat*}{3}
a^0+a^i &= e^{S_i} & b^0+c^i &= e^{S_i\bigtriangleup\{1\}\bigtriangleup\{i+2\}} \\
a^0+b^i &= e^{S_i\bigtriangleup\{i+1\}} & b^0+d^i &= e^{S_i\bigtriangleup\{1\}\bigtriangleup\{i+2,i+3\}} \\
a^0+c^i &= e^{S_i\bigtriangleup\{i+2\}} & c^0+c^i &= e^{S_i\bigtriangleup\{2\}\bigtriangleup\{i+2\}} \\
a^0+d^i &= e^{S_i\bigtriangleup\{i+2,i+3\}} & \qquad\qquad c^0+d^i &= e^{S_i\bigtriangleup\{2\}\bigtriangleup\{i+2,i+3\}} \\
b^0+b^i &= e^{S_i\bigtriangleup\{1\}\bigtriangleup\{i+1\}} & d^0+d^i &= e^{S_i\bigtriangleup\{2,3\}\bigtriangleup\{i+2,i+3\}}
\end{alignat*}
\end{lemma}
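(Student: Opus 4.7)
The ten identities all share the shape $X^0 + Y^i = e^{T}$, where $T$ is a symmetric difference of $S_i$ with a small set depending only on $X,Y\in\{a,b,c,d\}$. Since the definitions $b^i = a^i + e^{i+1}$, $c^i = a^i + e^{i+2}$ and $d^i = a^i + e^{i+2,i+3}$ express the shifts from $a^i$ to $b^i,c^i,d^i$ as fixed elements of $\B^n$, my plan is to prove the single base identity $a^0 + a^i = e^{S_i}$ and then deduce the other nine by linearity of $+$ in $\B^n$ together with the elementary fact that $e^X+e^Y=e^{X\bigtriangleup Y}$.

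For the base identity, I would split on the sign of $i$. When $0\leqslant i\leqslant n$, I would use $a^0 = a^{2n} = \overline{a^n} = e^n$ and $a^i = e^{\{1,\ldots,i-1\}}$, so that $a^0 + a^i = e^n + e^{\{1,\ldots,i-1\}}$; since coordinate indices are read modulo $n$ and $e^n = e^0$, this equals $e^{\{0,1,\ldots,i-1\}} = e^{\zinter} = e^{S_i}$. When $-n<i<0$, the extension $a^i = a^{i+2n}$ together with the antipodal relation gives $a^i = \overline{a^{i+n}} = e^{\{i+n,i+n+1,\ldots,n\}}$, so $a^0 + a^i = e^n + e^{\{i+n,\ldots,n\}} = e^{\{i+n,i+n+1,\ldots,n-1\}}$; read modulo $n$, this set is exactly $\interz = S_i$.

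The remaining nine identities follow immediately. For instance, $a^0 + b^i = (a^0+a^i) + e^{i+1} = e^{S_i\bigtriangleup\{i+1\}}$, $b^0 + c^i = (a^0+a^i) + e^1 + e^{i+2} = e^{S_i\bigtriangleup\{1\}\bigtriangleup\{i+2\}}$, and $d^0 + d^i = (a^0+a^i) + e^{\{2,3\}} + e^{\{i+2,i+3\}} = e^{S_i\bigtriangleup\{2,3\}\bigtriangleup\{i+2,i+3\}}$; every other case is analogous. The genuinely delicate step is the base identity for $i<0$, where one must reconcile the period-$2n$ extension of $a^i$ with the antipodal relation $a^{n+j}=\overline{a^j}$, while keeping in mind that coordinate indices live modulo $n$ whereas the sequence index $i$ lives modulo $2n$. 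Once this bookkeeping is settled cleanly, the rest of the proof is routine linear arithmetic in $\B^n$.
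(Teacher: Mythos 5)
Your proof is correct and is exactly the ``straightforward computation'' the paper leaves implicit: reduce all ten identities to the base case $a^0+a^i=e^{S_i}$ via $e^X+e^Y=e^{X\bigtriangleup Y}$ and the definitions $b^i=a^i+e^{i+1}$, $c^i=a^i+e^{i+2}$, $d^i=a^i+e^{i+2,i+3}$, then verify the base case by splitting on the sign of $i$ using $a^0=a^{2n}=\overline{a^n}=e^n=e^0$ and the antipodal/periodic extension. The only point to tidy is $i=0$, where the formula $a^i=e^{\{1,\ldots,i-1\}}$ does not apply (indeed $a^0=e^n\neq 0$), but the identity $a^0+a^0=0=e^{S_0}$ holds trivially.
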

In the above equations, we write, e.g., $\{1\}\bigtriangleup\{i+1\}$ instead of $\{1,i+1\}$, because $\{1\}\bigtriangleup\{i+1\}=\varnothing$ for $i=0$ or $n$, while the notation on the right suggests a singleton in this case. On the other hand, since $n\geqslant 7$, $i+2\neq i+3 \text{ mod } n$ for instance, so there is no need for a $\bigtriangleup$ notation here.

To prove Proposition \ref{prop:wd} and Lemma \ref{lem:neighbors}, we have to consider the 10 pairs of points and tell for which $i$ they can be equal or neighbors.
\begin{description}
\item[($a^0$ and $a^i$)] 
They are equal when $S_i$ is empty, \ie $i=0$, and neighbors when $S_i$ is empty or a singleton, \ie $-1\leqslant i\leqslant 1$, whence the three neighbors $a^{-1},a^0,a^1$ of $a^0$.
\item[($a^0$ and $b^i$)] 
They are equal when $S_i\bigtriangleup\{i+1\}$ is empty, which is possible only if $S_i$ is a singleton, \ie $i=1$ and $S_i=\{0\}$, or $i=-1$ and $S_i=\{-1\}$. In the first case, $i+1=2\neq 0 \text{ mod } n$, and in the second case, $i+1=0\neq -1 \text{ mod } n$. Thus $i+1\notin S_i$, and $a^0$ and $b^i$ are never equal. They are neighbors when $S_i\bigtriangleup\{i+1\}$ is a singleton, which is possible only if $S_i$ has cardinality $0$ or $2$. In the first case, $i=0$, and this corresponds to the neighbor $b^0$ of $a^0$. In the second case:
$$
S_i\bigtriangleup\{i+1\}=
\begin{cases}
\{0,1\}\bigtriangleup\{3\}=\{0,1,3\} & \text{ if $i=2$} \\
\{-2,-1\}\bigtriangleup\{-1\}=\{-2\} & \text{ if $i=-2$},
\end{cases}
$$
whence the neighbor $b^{-2}$ of $a^0$.
\item[($a^0$ and $c^i$)] 
They are equal when $S_i\bigtriangleup\{i+2\}$ is empty, which is possible only if $i=1$ or $-1$. In the first case, $S_i=\{0\}$ and $i+2=3\neq 0 \text{ mod } n$, and in the second case, $S_i=\{-1\}$ and $i+2=1\neq -1 \text{ mod } n$. Thus $a^0$ and $c^i$ are never equal. They are neighbors when $S_i\bigtriangleup\{i+2\}$ is a singleton, which is possible only if $S_i$ has cardinality $0$ or $2$. In the first case, $i=0$, and this corresponds to the neighbor $c^0$ of $a^0$. In the second case:
$$
S_i\bigtriangleup\{i+2\}=
\begin{cases}
\{0,1\}\bigtriangleup\{4\}=\{0,1,4\} & \text{ if $i=2$} \\
\{-2,-1\}\bigtriangleup\{0\}=\{-2,-1,0\} & \text{ if $i=-2$},
\end{cases}
$$
and $S_i\bigtriangleup\{i+2\}$ is not a singleton.
\item[($a^0$ and $d^i$)] 
They are equal when $S_i\bigtriangleup\{i+2,i+3\}$ is empty, which is possible only if $S_i$ has cardinality $2$, \ie $i=2$ or $-2$, and then $i+2\notin S_i$ as above. Thus $a^0$ and $d^i$ are never equal. They are neighbors when $S_i\bigtriangleup\{i+2,i+3\}$ is a singleton, \ie when $S_i$ has cardinality $1$ or $3$. There are 4 cases:
$$
S_i\bigtriangleup\{i+2,i+3\}=
\begin{cases}
\{0\}\bigtriangleup\{3,4\}=\{0,3,4\} & \text{ if $i=1$} \\
\{-1\}\bigtriangleup\{1,2\}=\{-1,1,2\} & \text{ if $i=-1$} \\
\{0,1,2\}\bigtriangleup\{5,6\}=\{0,1,2,5,6\} & \text{ if $i=3$} \\
\{-3,-2,-1\}\bigtriangleup\{-1,0\}=\{-3,-2,0\} & \text{ if $i=-3$,} \\
\end{cases}
$$
under the assumption $n\geqslant 7$. In any case, $S_i\bigtriangleup\{i+2,i+3\}$ is not a singleton, and $a^0$ and $d^i$ are not neighbors.
\item[($b^0$ and $b^i$)] 
For $i\neq 0$, they are equal or neighbors when $S_i\bigtriangleup\{1\}\bigtriangleup\{i+1\}$ is empty or a singleton, which is possible only if $S_i$ has cardinality at most $3$. There are 6 cases:
$$
S_i\bigtriangleup\{1\}\bigtriangleup\{i+1\}=
\begin{cases}
\{0\}\bigtriangleup\{1\}\bigtriangleup\{2\}=\{0,1,2\} & \text{ if $i=1$} \\
\{-1\}\bigtriangleup\{1\}\bigtriangleup\{0\}=\{-1,0,1\} & \text{ if $i=-1$} \\
\{0,1\}\bigtriangleup\{1\}\bigtriangleup\{3\}=\{0,3\} & \text{ if $i=2$} \\
\{-2,-1\}\bigtriangleup\{1\}\bigtriangleup\{-1\}=\{-2,1\} & \text{ if $i=-2$} \\
\{0,1,2\}\bigtriangleup\{1\}\bigtriangleup\{4\}=\{0,2,4\} & \text{ if $i=3$} \\
\{-3,-2,-1\}\bigtriangleup\{1\}\bigtriangleup\{-2\} & \text{ if $i=-3$,} \\
\end{cases}
$$
under the assumption $n\geqslant 7$, and in any case, $S_i\bigtriangleup\{1\}\bigtriangleup\{i+1\}$ is neither empty nor a singleton.
\item[($b^0$ and $c^i$)] 
They are equal or neighbors when $S_i\bigtriangleup\{1\}\bigtriangleup\{i+2\}$ is empty or a singleton, which is possible only if $S_i$ has cardinality at most $3$. There are 7 cases:
$$
S_i\bigtriangleup\{1\}\bigtriangleup\{i+2\}=
\begin{cases}
\{1,2\} & \text{ if $i=0$} \\
\{0,1,3\} & \text{ if $i=1$} \\
\{-1\} & \text{ if $i=-1$} \\
\{0,1\}\bigtriangleup\{1\}\bigtriangleup\{4\}=\{0,4\} & \text{ if $i=2$} \\
\{-2,-1\}\bigtriangleup\{1\}\bigtriangleup\{0\}=\{-2,-1,0,1\} & \text{ if $i=-2$} \\
\{0,1,2\}\bigtriangleup\{1\}\bigtriangleup\{5\}=\{0,2,5\} & \text{ if $i=3$} \\
\{-3,-2,-1\}\bigtriangleup\{1\}\bigtriangleup\{-1\} & \text{ if $i=-3$},
\end{cases}
$$
under the assumption $n\geqslant 7$. Thus $b^0$ and $c^i$ are never equal, and neighbors when $i=-1$, which corresponds to the neighbor $c^{-1}$ of $b^0$.
\item[($b^0$ and $d^i$)] 
They are equal or neighbors when $S_i\bigtriangleup\{1\}\bigtriangleup\{i+2,i+3\}$ is empty or a singleton, which is possible only if $S_i$ has cardinality at most $4$. There are 9 cases: $S_i\bigtriangleup\{1\}\bigtriangleup\{i+2,i+3\}$ equals
$$
\begin{cases}
\{1,2,3\} & \text{ if $i=0$} \\
\{0,1,3,4\} & \text{ if $i=1$} \\
\{-1,2\} & \text{ if $i=-1$} \\
\{0,4,5\} & \text{ if $i=2$} \\
\{-2,-1\}\bigtriangleup\{1\}\bigtriangleup\{0,1\}=\{-2,-1,0\} & \text{ if $i=-2$} \\
\{0,1,2\}\bigtriangleup\{1\}\bigtriangleup\{5,6\}=\{0,2,5,6\} & \text{ if $i=3$} \\
\{-3,-2,-1\}\bigtriangleup\{1\}\bigtriangleup\{-1,0\}=\{-3,-2,0,1\} & \text{ if $i=-3$} \\
\{0,1,2,3\}\bigtriangleup\{1\}\bigtriangleup\{6,7\}=\{0,2,3\}\bigtriangleup\{6,7\} & \text{ if $i=4$} \\
\{-4,-3,-2,-1\}\bigtriangleup\{1\}\bigtriangleup\{-2-1\}=\{-4,-3,1\} & \text{ if $i=-4$},
\end{cases}
$$
under the assumption $n\geqslant 7$. The case $i=4$ is a $3$-element set if $n=7$. Thus $b^0$ and $d^i$ are never equal or neighbors.
\item[($c^0$ and $c^i$)] 
If $i\neq 0$, they are equal or neighbors when $S_i\bigtriangleup\{2\}\bigtriangleup\{i+2\}$ is empty or a singleton, which is possible only if $S_i$ has cardinality at most $3$. There are 6 cases: $S_i\bigtriangleup\{2\}\bigtriangleup\{i+2\}$ equals
$$
\begin{cases}
\{0,2,3\} & \text{ if $i=1$} \\
\{-1,2,1\} & \text{ if $i=-1$} \\
\{0,1,2,4\} & \text{ if $i=2$} \\
\{-2,-1\}\bigtriangleup\{2\}\bigtriangleup\{0\}=\{-2,-1,0,2\} & \text{ if $i=-2$} \\
\{0,1,2\}\bigtriangleup\{2\}\bigtriangleup\{5\}=\{0,1,5\} & \text{ if $i=3$} \\
\{-3,-2,-1\}\bigtriangleup\{2\}\bigtriangleup\{-1\}=\{-3,-2,2\} & \text{ if $i=-3$},
\end{cases}
$$
under the assumption $n\geqslant 7$. Thus $c^0$ and $c^i$ are never equal or neighbors for $i\neq 0$.
\item[($c^0$ and $d^i$)] 
They are equal or neighbors when $S_i\bigtriangleup\{2\}\bigtriangleup\{i+2,i+3\}$ is empty or a singleton, which is possible only if $S_i$ has cardinality at most $4$. There are 9 cases: $S_i\bigtriangleup\{2\}\bigtriangleup\{i+2,i+3\}$ equals
$$
\begin{cases}
\{3\} & \text{ if $i=0$} \\
\{0,2,3,4\} & \text{ if $i=1$} \\
\{-1,1\} & \text{ if $i=-1$} \\
\{0,1,2,4,5\} & \text{ if $i=2$} \\
\{-2,-1\}\bigtriangleup\{2\}\bigtriangleup\{0,1\}=\{-2,-1,0,1,2\} & \text{ if $i=-2$} \\
\{0,1,2\}\bigtriangleup\{2\}\bigtriangleup\{5,6\}=\{0,1,5,6\} & \text{ if $i=3$} \\
\{-3,-2,-1\}\bigtriangleup\{2\}\bigtriangleup\{-1,0\}=\{-3,-2,0,2\} & \text{ if $i=-3$} \\
\{0,1,2,3\}\bigtriangleup\{2\}\bigtriangleup\{6,7\}=\{0,1,3\}\bigtriangleup\{6,7\} & \text{ if $i=4$} \\
\{-4,-3,-2,-1\}\bigtriangleup\{2\}\bigtriangleup\{-2-1\}=\{-4,-3,2\} & \text{ if $i=-4$},
\end{cases}
$$
under the assumption $n\geqslant 7$. The case $i=0$ gives the neighbor $d^0$ of $c^0$. Note that the case $i=4$ would be a singleton if $n=6$, so for the first time we use the full hypothesis $n\geqslant 7$.
\item[($d^0$ and $d^i$)] 
If $i\neq 0$, they are equal or neighbors when $S_i\bigtriangleup\{2,3\}\bigtriangleup\{i+2,i+3\}$ is empty or a singleton, which is possible only if $S_i$ has cardinality at most $5$. There are 10 cases: $S_i\bigtriangleup\{2,3\}\bigtriangleup\{i+2,i+3\}$ equals
$$
\begin{cases}
\{0,2,4\} & \text{ if $i=1$} \\
\{-1,1,3\} & \text{ if $i=-1$} \\
\{0,1,2,3,4,5\} & \text{ if $i=2$} \\
\{-2,-1\}\bigtriangleup\{2,3\}\bigtriangleup\{0,1\}=\{-2,-1,0,1,2,3\} & \text{ if $i=-2$} \\
\{0,1,2\}\bigtriangleup\{2,3\}\bigtriangleup\{5,6\}=\{0,1,3\}\bigtriangleup\{5,6\} & \text{ if $i=3$} \\
\{-3,-2,-1\}\bigtriangleup\{2,3\}\bigtriangleup\{-1,0\} & \text{ if $i=-3$} \\
\{0,1,2,3\}\bigtriangleup\{2,3\}\bigtriangleup\{6,7\}=\{0,1\}\bigtriangleup\{6,7\} & \text{ if $i=4$} \\
\intervalle{-4,0}\bigtriangleup\{2,3\}\bigtriangleup\{-2-1\}=\{-4,-3\}\bigtriangleup\{2,3\} & \text{ if $i=-4$} \\
\{0,1,2,3,4\}\bigtriangleup\{2,3\}\bigtriangleup\{7,8\}=\{0,1,4\}\bigtriangleup\{7,8\} & \text{ if $i=5$} \\%
\intervalle{-5,0}\bigtriangleup\{2,3\}\bigtriangleup\{-3,-2\}=\{-5,-4,-1\}\bigtriangleup\{2,3\} & \text{ if $i=-5$},%
\end{cases}
$$
under the assumption $n\geqslant 7$. The cases $i=5$ and $i=-5$ give, when $n=7$, the neighbors $d^5$ and $d^{-5}$ of $d^0$, with $d^0+d^5=e^4$ and $d^0+d^{-5}=e^{-1}=e^6$. The cases $i=4$ and $i=-4$ would be empty if $n=6$, so again here we use the full hypothesis $n\geqslant 7$.
\end{description}


\bibliographystyle{plain}

\end{document}